\theoremstyle{definition}
\newtheorem{defn}{Definition}
\newtheorem{remark}{Remark}
\theoremstyle{plain}
\newtheorem{theorem}{Theorem}
\newtheorem{lemma}[theorem]{Lemma}
\newtheorem{proposition}[theorem]{Proposition}
\newtheorem{corollary}[theorem]{Corollary}
\newtheorem*{theorem*}{Theorem}
\begin{document}
\title[Refinements of Franks' theorem]{Refinements of Franks' theorem and applications in Reeb dynamics}

\author{Hui Liu}
\address{School of Mathematics and Statistics, Wuhan University,
Wuhan 430072, Hubei, P. R. China}
\email{huiliu00031514@whu.edu.cn}

\author{Jian Wang}
\address{School of Mathematical Sciences and LPMC, Nankai University, Tianjin 300071, P. R. China}
\email{wangjian@nankai.edu.cn}

\author{Jingzhi Yan}
\address{College of Mathematics, Sichuan University, Chengdu 610065, P.R. China }
\email{jyan@scu.edu.cn}

\subjclass[2000]{37E45, 37E30}
\date{}

\begin{abstract}
In this article, we give two refinements of Franks' theorem: For  orientation and area preserving homeomorphisms of the closed or open annulus, the existence of $k$-periodic orbits ($(k,n_0)=1$) forces the existence of infinitely many periodic orbits with periods prime to $n_0$. Moreover, if $f$ is reversible,  the periodic orbits  above could be symmetric. Our improvements of Franks' theorem can be applied to Reeb dynamics and celestial mechanics,
for example, we give some precise information about the symmetries of periodic orbits found in Hofer, Wysocki and Zehnder's dichotomy
theorem when the tight 3-sphere is equipped with some additional symmetries, and also the symmetries of periodic orbits on the energy level
of H$\acute{e}$non-Heiles system in celestial mechanics.
\end{abstract}

\maketitle

\section{introduction}

In his search for periodic solutions in the restricted three body problem of celestial mechanics,  Poincar\'{e} constructed an area-preserving section map of an annulus $\mathbb{A}=\mathbb{R}/\mathbb{Z}\times [0,1]$ on the energy surface. The annulus was bounded by the so-called direct and retrograde periodic orbits. It inspired him in 1912 to formulate the following famous Poincar\'{e}-Birkhoff theorem,
 which  was later proved by Birkhoff \cite{Birkhoff13,Birkhoff26}.

\begin{theorem*}[Poincar\'{e}-Birkhoff] Every area preserving and orientation preserving homeomorphism of an annulus $\mathbb{A}$ rotating the two boundaries in opposite directions possesses at least 2 fixed points in the interior.
\end{theorem*}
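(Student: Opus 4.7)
The plan is to work on the universal cover and combine area preservation with plane topology, in the spirit of Birkhoff's original proof. First I would lift $f$ to a homeomorphism $\tilde f\colon\mathbb R\times[0,1]\to\mathbb R\times[0,1]$ commuting with the deck transformation $T(x,y)=(x+1,y)$, and write $\tilde f(x,y)=(x+\phi(x,y),\psi(x,y))$ with $\phi$ continuous and $1$-periodic in $x$. The opposite-rotation hypothesis allows the normalization $\phi(x,0)>0>\phi(x,1)$, and interior fixed points of $f$ correspond bijectively to $T$-orbits of interior fixed points of $\tilde f$ in $\mathbb R\times(0,1)$.

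For the first fixed point, I would exploit the zero-twist set $Z=\{\phi=0\}$. Because $\phi$ changes sign between the two boundaries, a standard plane-topology argument produces a $T$-invariant connected subcontinuum $Z_0\subset Z$ that joins $\mathbb R\times\{0\}$ to $\mathbb R\times\{1\}$. On $Z_0$ the map $\tilde f$ is purely vertical, so if $\tilde f$ had no fixed point on $Z_0$, then $\psi(x,y)-y$ would be of constant sign on each connected component, and the region trapped between $Z_0$ and $\tilde f(Z_0)$ in the compact cylinder would be strictly shifted vertically by $\tilde f$. Iterating and using $T$-periodicity would then force an arbitrarily large accumulation of pairwise-disjoint area inside a finite-area cylinder, contradicting area preservation. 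This yields a first interior fixed point $p_1$.

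For the second fixed point, I would use a surgery-and-repeat strategy that re-invokes area preservation. Assuming $p_1$ is the unique interior fixed point, I would excise a small $f$-invariant disk neighborhood of $p_1$ to obtain a topological annulus $\mathbb A'$ on which the induced map is area-preserving and orientation-preserving, with twist-like boundary behavior inherited from the original on one boundary and generated by the local rotation number around $p_1$ on the other, and with no interior fixed points. Re-running the first argument on $\mathbb A'$ then produces a contradiction, giving the second fixed point.

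The main obstacle, and the well-known technical heart of the theorem, is executing the first step rigorously in the purely continuous category: the continuum $Z_0$ can be wild rather than a simple arc, and the ``strictly shifted'' contradiction must be formalized via a Brouwer-translation-type argument, which in the merely $C^0$ setting requires careful plane topology. A secondary difficulty is the surgery step: one must guarantee that the local rotation number at $p_1$, transferred to the new inner boundary of $\mathbb A'$, indeed produces the opposite-rotation hypothesis needed to rerun the first argument, which may require replacing $f$ by a suitable iterate or refining the excision.
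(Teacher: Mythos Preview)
The paper does not prove the Poincar\'e--Birkhoff theorem; it is quoted in the introduction purely as historical background and attributed to Birkhoff's papers \cite{Birkhoff13,Birkhoff26}. There is therefore no ``paper's own proof'' to compare against, and your proposal should be assessed on its own.

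Your plan for the first fixed point is in the spirit of Birkhoff's original argument, and you correctly flag the delicate point: the zero-twist set $Z$ need not contain a nice $T$-invariant arc, only a separating continuum, and turning ``$\tilde f$ is purely vertical on $Z_0$'' plus area preservation into an honest contradiction requires real work in the $C^0$ category. That part is an acceptable sketch with honestly acknowledged gaps.

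The genuine problem is your second-fixed-point step. You propose to ``excise a small $f$-invariant disk neighborhood of $p_1$'', but for a merely area-preserving homeomorphism there is no reason such an invariant disk exists---the local dynamics near $p_1$ can be arbitrarily wild. Even if you relax to a non-invariant excision, the induced map on the new annulus is not a self-map, so you cannot simply re-run the argument. Moreover, the ``local rotation number around $p_1$'' need not be defined, let alone have the sign needed to recreate the opposite-twist hypothesis on the new inner boundary. Birkhoff's own route to the second fixed point (in \cite{Birkhoff26}) is quite different and does not proceed by surgery; modern treatments typically obtain it via fixed-point index considerations (the total index of an area-preserving annulus map with boundary twist must vanish, so a single fixed point of index $+1$ is impossible) or by variational/generating-function methods in the smooth case. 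Your surgery-and-repeat strategy, as written, does not work.
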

The strength of this theorem is that it provides at once infinitely many periodic points if it is applied to the iterates, which leads to infinitely many periodic solutions in applications, see \cite{Birkhoff15}. More information about Poincar\'{e}-Birkhoff theorem one can refer to \cite[Chapter 6]{HZ94}. \smallskip

Under similar but much more general conditions, Franks proved the existence of infinitely many periodic orbits if the homeomorphism has two different rotation numbers \cite{Franks88,Franks92} (see Theorem \ref{thm: Franks and le calvez} below).  As a consequence, Franks proved the following celebrated theorem \cite{Franks92,Franks96}:

\begin{theorem*}[Franks]\label{thm:Franks 3.5} Suppose $F$ is an area preserving homeomorphism of the
open or closed annulus which is isotopic to the identity. If $F$ has
at least one fixed or periodic point, then $F$ must have infinitely
many interior periodic points.
\end{theorem*}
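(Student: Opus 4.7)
My plan is to argue by contradiction, using the earlier theorem of Franks on two rotation numbers (Theorem~\ref{thm: Franks and le calvez}). After replacing $F$ by an iterate $F^{k}$, I may assume $F$ has a \emph{fixed} point $z_{0}$: this is harmless because $F^{k}$ is again area preserving and isotopic to the identity, and every periodic orbit of $F^{k}$ is one of $F$. Lifting to the universal cover $\widetilde{\mathbb{A}}$ and choosing the lift $\widetilde F$ so that some preimage $\widetilde z_{0}$ of $z_{0}$ is fixed, the rotation number of $z_{0}$ with respect to this lift equals $0$. Suppose, for contradiction, that $F$ admits only finitely many interior periodic orbits.

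By Theorem~\ref{thm: Franks and le calvez}, the existence of two distinct rotation numbers for $\widetilde F$ would yield infinitely many periodic orbits. Hence under the contradiction hypothesis the rotation set of $\widetilde F$ collapses to a single value, and that value must be $0$ since $\widetilde z_{0}$ is fixed. In particular, every recurrent orbit of $F$ has rotation number $0$ relative to $\widetilde F$.

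Area preservation now enters. By the Poincar\'e recurrence theorem, almost every point of $\mathbb{A}$ is recurrent; applying the Birkhoff ergodic theorem to the displacement cocycle $\varphi(x) = \pi_{1}(\widetilde F(\widetilde x)) - \pi_{1}(\widetilde x)$ (well defined on $\mathbb{A}$ once a fundamental domain is fixed), the rotation number $\rho(x)$ exists almost everywhere and, by the previous paragraph, equals $0$ almost everywhere, consistent with $\int \rho\, d\mu = 0$. To contradict the uniqueness of the value $0$ in the rotation set, one then needs to produce an orbit with a different rotation number; the natural candidates are the prime-end rotation numbers at the two ends of $\mathbb{A}$, or orbits captured by a Birkhoff attractor at each end.

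The main obstacle is precisely this last step: ruling out the scenario in which the interior fixed point $z_{0}$ and both end (or prime-end) rotation numbers all equal $0$ simultaneously. I would attack it by using Le~Calvez's equivariant Brouwer foliation associated with the finite periodic set to locate a recurrent orbit that winds nontrivially around the annulus, or alternatively by a prime-end Carath\'eodory analysis combined with area preservation at each end. For the open annulus one must additionally use area preservation (finite local mass, Atkinson-type recurrence of the cocycle) to prevent orbits from escaping entirely to an end, so that the second rotation number is actually attained by a recurrent orbit in $\mathbb{A}$. This geometric step --- turning ``only finitely many periodic orbits'' into the existence of a second rotation number --- is where the real work of the proof lies.
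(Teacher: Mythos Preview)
First, note that this statement is not proved in the paper: it is Franks' original theorem, quoted from \cite{Franks92,Franks96} as background. The paper does, however, essentially reprove it inside the proof of Theorem~\ref{thm: refinement of Franks} (the refinement), so that is the natural point of comparison.

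Your outline is correct as far as it goes, and in fact coincides with the paper's strategy: reduce to a fixed point, choose the lift with rotation number $0$, and observe via Theorem~\ref{thm: Franks and le calvez} that the only obstruction to infinitely many periodic orbits is that every positively recurrent point has rotation number $0$. You also correctly name Le~Calvez's transverse foliation as the tool to break this obstruction. But your proposal stops exactly where the content is: you say ``this geometric step is where the real work lies'' and leave it undone. The prime-end/Birkhoff-attractor alternative you float is not pursued in the paper and would require substantial independent work.

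Here is how the paper (in case~ii) of the proof of Theorem~\ref{thm: refinement of Franks}) actually carries out that step. Compactify the annulus to a sphere $S^{2}$ by adding two points $N,S$, and take a maximal isotopy $I$ fixing $N$ and $S$ (Theorem~\ref{thm: BCLR}) together with a transverse foliation $\mathcal{F}$. If $\mathrm{Fix}(I)=\{N,S\}$, every leaf of $\mathcal{F}$ is a line from one pole to the other; pick a leaf $\gamma$, lift it to $\widetilde\gamma$, and choose a small disk $U$ so that its lift $\widetilde U$ lies just to the left of $\widetilde\gamma$ while $\widetilde g(\widetilde U)$ lies to the right. Since $\widetilde g$ pushes the right half-plane of $\widetilde\gamma$ into itself, the first-return displacement $m(z)=p_{1}\circ\widetilde g^{\tau(z)}(\widetilde z)-p_{1}(\widetilde z)$ on $U\cap\mathrm{Rec}^{+}(g)$ satisfies $m(z)>\tfrac12$. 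Kac's theorem gives $\tau\in L^{1}$, Birkhoff's ergodic theorem gives a.e.\ convergence of $\tau_{n}/n$, and hence $\rho(\widetilde g,z)>0$ for a.e.\ $z\in U$ --- contradicting the assumption that all rotation numbers vanish. Thus $\mathrm{Fix}(I)$ contains a third point; removing two singularities joined by a leaf produces a new annulus in which the same argument manufactures a nonzero rotation number alongside the zero one, and Theorem~\ref{thm: Franks and le calvez} finishes. This concrete use of a single leaf as a ``gate'' together with Kac's lemma is the missing ingredient in your proposal.
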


By the original work of  Hofer, Wysocki and Zehnder,  a classic method for the existence of periodic motions of Hamiltonian system with two degree is to  reduce the dynamics to the annulus-type global surface of  section (see \cite{HWZ98, HWZ03}). Under some ``good'' conditions,  a global surface of section on the energy level exists, then Poincar\'e-Birkhoff and Franks' theorems imply the existence of two or infinitely many periodic orbits.
Recently, some important progresses were made in \cite{CGHP} and \cite{CDR20} for similar results on contact three manifold, which still rely on Poincar\'e-Birkhoff and Franks' theorems. \smallskip

In this article, we will give several refinements of Poincar\'e-Birkhoff and  Franks' theorems for annulus homeomorphisms
and  applications in Reeb dynamics and celestial mechanics to get periodic orbits under some symmetric conditions.

\subsection{Periodic orbits for annulus homeomorphisms}We consider a homeomorphism $f$ of the open (resp. closed) annulus $\mathbb{A}=\mathbb{R}/\mathbb{Z}\times (0,1)\cong \{z\in\mathbb{R}^2|1< |z|< 2\}$ (resp. $\mathbb{A}=\mathbb{R}/\mathbb{Z}\times [0,1]\cong \{z\in\mathbb{R}^2|1\le |z|\le 2\}$) that is isotopic to the identity. An \emph{area} of a surface  is  a locally finite Borel  measure without atom and with total support. We say that $z$ is a {\it $k$-prime-periodic point of $f$} if $z$ is a $k$-periodic  point but not an
$l$-periodic point of $f$ for all $0<l<k$. Let $\mathrm{Per}_{k}(f)$ be the set of $k$-prime-periodic points of $f$. We obtain the following refinement of Franks' theorem.

\begin{theorem}\label{thm: refinement of Franks}
Let $f$ be a homeomorphism of closed or open annulus that is isotopic to the identity and preserves a finite area. Assume that $k,n_0\in \mathbf{N}$ which satisfy that $(k,n_0)=1$. If $f$ has a $k$-periodic point, then
  \[\sharp \left\{\bigcup_{(k\,',n_0)=1} \mathrm{Per}_{k\,'}(f) \right\}=\infty.\]
In particular, if $f$ has an odd periodic point, it has infinitely many odd periodic points.
\end{theorem}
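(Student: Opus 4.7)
The plan is to lift $f$ to the universal cover, extract a rational rotation number from the given $k$-periodic orbit, exhibit a second rotation number in the rotation set, and then feed these two rotation numbers into the Franks--Le Calvez theorem (Theorem~\ref{thm: Franks and le calvez}) to harvest periodic orbits of infinitely many distinct prime periods coprime to $n_0$.

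Concretely, I would lift $f$ to a homeomorphism $\tilde f$ of the universal cover and let $\tilde z$ be a lift of the given $k$-periodic point $z$, so that $\tilde f^k(\tilde z)=\tilde z+(p,0)$ for some $p\in\mathbb{Z}$. Writing $p/k=p_0/k_0$ in lowest terms gives $k_0\mid k$, hence $(k_0,n_0)=1$, and $p_0/k_0$ is a rotation number of $\tilde f$ realized at $\tilde z$. Assuming that a second rotation number $\beta\neq p_0/k_0$ of $\tilde f$ is available, the Franks--Le Calvez theorem produces a $q'$-prime-periodic orbit of $f$ for every reduced rational $p'/q'$ strictly between $p_0/k_0$ and $\beta$. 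By Dirichlet's theorem on primes in arithmetic progressions there are infinitely many primes $q'$ coprime to $n_0$, and for each such prime of sufficient size the interval $(p_0/k_0,\beta)$ contains some fraction $p'/q'$ (automatically in lowest terms by the primality of $q'$), yielding a $q'$-prime-periodic point with $(q',n_0)=1$. Since these prime periods are mutually distinct, one obtains infinitely many periodic points whose prime periods are coprime to $n_0$.

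The principal obstacle is producing the second rotation number $\beta$. I would split into two cases. If the rotation set of $\tilde f$ contains any number besides $p_0/k_0$ we are done immediately. In the remaining singleton case every periodic orbit of $f$ has period a multiple of $k_0$, and I would pass to the iterate $g=f^{k_0}$ with the lift $\tilde g$ defined by $\tilde g(\tilde x)=\tilde f^{k_0}(\tilde x)-(p_0,0)$, for which $\tilde z$ is a fixed point and the rotation set is $\{0\}$. Franks' theorem applied to $g$ then yields infinitely many periodic orbits of $g$, and a direct period-count shows that each $m$-prime-periodic orbit of $g$ corresponds to a $k_0m$-prime-periodic orbit of $f$. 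The delicate final step will be to ensure that infinitely many such $m$ are themselves coprime to $n_0$; I expect this to follow either from an induction-on-iteration argument applied to $g$ (whose fixed point at $\tilde z$ replaces the $k$-periodic hypothesis by parameter $1$, trivially coprime to $n_0$) or from a deeper structural result based on Le~Calvez's transverse foliation theory, which rules out the persistent singleton rotation set unless $g$ already has infinitely many fixed points, thereby yielding infinitely many $f$-orbits of period dividing $k_0$, all coprime to $n_0$.
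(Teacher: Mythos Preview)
Your overall architecture matches the paper's: split according to whether the rotation set of $\widetilde f$ contains more than one value, and in the easy case feed the two rotation numbers into Theorem~\ref{thm: Franks and le calvez} to harvest $q$-prime-periodic orbits with $(q,n_0)=1$.

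The genuine gap is in the singleton case. Your ``induction-on-iteration'' suggestion is circular: applying the very theorem you are proving to $g=f^{k_0}$ with parameter $k=1$ presupposes the result. Your alternative, that the transverse foliation ``rules out the persistent singleton rotation set unless $g$ already has infinitely many fixed points,'' is not what that theory delivers, and as stated it is not correct: an area-preserving $g$ can have rotation set $\{0\}$ on the original annulus with only finitely many fixed points. What the paper actually does is more subtle. It compactifies the annulus to a sphere $S^2$ (adding $N$ and $S$), takes a \emph{maximal} isotopy $I$ of $g$ fixing $N$ and $S$, and uses a Kac--Birkhoff first-return argument along a leaf of the transverse foliation to show that $I$ must fix a third point. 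Then, picking a leaf joining two singularities $z_1,z_2$ of the foliation, it passes to the \emph{new} annulus $S^2\setminus\{z_1,z_2\}$. On that new annulus there is a fixed point of rotation number $0$ (a third fixed point of $I$) and, by the same first-return argument, a recurrent point of nonzero rotation number; Theorem~\ref{thm: Franks and le calvez} now applies there and yields $q$-prime-periodic points of $g$ with $(q,n_0)=1$. Since $(k,n_0)=1$, the corresponding $f$-prime-periods divide $kq$ and are coprime to $n_0$. The crucial maneuver you are missing is this change of annulus: the foliation does not force the rotation set on the \emph{original} annulus to open up, it manufactures a \emph{different} annulus on which it does.

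Two minor remarks. First, invoking Dirichlet on primes in arithmetic progressions is overkill: you only need infinitely many primes not dividing $n_0$, which is all but finitely many primes. Second, in the singleton case, simply citing Franks' theorem for $g$ gives infinitely many periodic points but with no control whatsoever on the periods, so that step as written does not advance the argument; the period control comes only after you have produced a second rotation number somewhere.
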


\begin{remark}
This theorem gives a positive answer to Kang's question when he studied the planar reversible maps \cite{Kang}.
\end{remark}

\bigskip

Some classical Hamiltonian systems carry a symmetric structure, and  the global surface of sections inherit a reflection.  We are also interested in the dynamics in the symmetric case.

More precisely, let $R: (x,y)\mapsto (-x, y)$ be the reflection  of $\mathbb{R}^2$. For any $R$-invariant connected domain $\Omega\subset \mathbb{R}^2$, $R$ descends to a reflection on $\Omega$ that we still denote by $R$. A homeomorphism $f$ of $\Omega$  is called   \emph{reversible} if it satisfies
    \[f\circ R=R \circ f^{-1},\]
 and a periodic point $z\in \Omega $ is called  \emph{symmetric}  if its orbit is symmetric, i.e. $R(z)=f^k (z)$ for some $k\in\mathrm{N}$. We have the following refinement of Franks' theorem for reversible maps.

\begin{theorem}\label{thm: Franks thm for reversible map}
  Let $f$ be a homeomorphism of the closed or open annulus, that is isotopic to the identity, preserves a finite area, and is reversible. If $f$ has at least one  periodic point, then it has infinitely many symmetric periodic orbits. Moreover, given $n_0\in \mathbb{N}$,   if $f$ has a $k$-periodic point, and $(k, n_0)=1$, then
  \[\sharp \left\{  \bigcup_{(k\,',n_0)=1}\text{symmetric }  k\,' \text{-prime-periodic  points  of } f \right\}=\infty.\]
\end{theorem}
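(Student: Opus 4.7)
Writing $T := f \circ R$, the reversibility $f \circ R = R \circ f^{-1}$ is equivalent to $f = T \circ R$ where $R$ and $T$ are both involutions. The first observation to record is that $R$ sends $k$-prime-periodic orbits of $f$ to $k$-prime-periodic orbits, and an orbit $\mathcal{O}(z)$ is symmetric if and only if it meets $\mathrm{Fix}(R) \cup \mathrm{Fix}(T)$: if $R(z) = f^{j}(z)$, then $f^{j/2}(z) \in \mathrm{Fix}(R)$ when $j$ is even, while a suitable mid-point of the orbit lies in $\mathrm{Fix}(T)$ when $j$ is odd. This reduces the search for symmetric periodic orbits to finding $f$-periodic points on one of two one-dimensional fixed sets, and more structurally identifies symmetric orbits with $R$-fixed orbits of the $R$-action on the set of $k'$-prime-periodic orbits of $f$.

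\textbf{The main technical step} is an $R$-equivariant refinement of the Franks--Le Calvez theorem (Theorem \ref{thm: Franks and le calvez}): under the hypotheses of Theorem \ref{thm: Franks thm for reversible map}, whenever $f$ has two periodic orbits with distinct rotation numbers $\rho_1<\rho_2$, every rational $p/q\in(\rho_1,\rho_2)$ in lowest terms is realized by a \emph{symmetric} $q$-prime-periodic orbit. To prove this, I would choose an identity isotopy from $\mathrm{id}$ to $f$ that is compatible with $R$ in the reversible sense (obtained by $R$-averaging a generic isotopy with its reverse), then run Le Calvez's construction of a maximal isotopy and its transverse foliation $\mathcal{F}$ equivariantly, so that $R$ acts on $\mathcal{F}$ and on the set of periodic orbits that the Franks--Le Calvez machinery produces in each rotation class. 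A parity/Lefschetz-type count on this $R$-action, together with the identification in the previous paragraph of symmetric orbits as $R$-fixed orbits, forces at least one symmetric orbit in each rotation class.

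\textbf{Given this equivariant tool}, Theorem \ref{thm: Franks thm for reversible map} follows along the lines of Theorem \ref{thm: refinement of Franks}: the hypothesized $k$-periodic orbit of $z$ provides one rotation number, and the conclusion of Theorem \ref{thm: refinement of Franks} yields a second one (the infinitely many prime-periodic orbits produced there cannot all share a single rotation number, since each rotation class contains only finitely many prime-periodic orbits of a given period). Applying the equivariant theorem on the resulting rotation interval produces a symmetric $q$-prime-periodic orbit for every rational $p/q$ in the interval in lowest terms, and infinitely many such $q$ satisfy $\gcd(q,n_0)=1$ by elementary number theory. The main obstacle is the equivariant Franks--Le Calvez step: making Le Calvez's construction of a maximal isotopy and its transverse foliation compatible with $R$, and then executing the parity argument that picks out $R$-fixed orbits in each rotation class, is the novel content, following the spirit of Kang's work on reversible planar maps; the subsequent rotation-number bookkeeping and the reduction between closed and open annulus are essentially routine.
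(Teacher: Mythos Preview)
Your proposed ``main technical step'' --- an $R$-equivariant version of Le Calvez's maximal isotopy and transverse foliation, followed by a parity/Lefschetz count --- is both unproven and unnecessary. The paper obtains the same conclusion (Corollary~\ref{cor: Pb for reversible map}: every irreducible $p/q$ in the rotation interval is realized by a \emph{symmetric} $q$-prime-periodic orbit) by a one-paragraph argument: lift a symmetry interval $Y_i$ to $\widetilde{Y}_i$, observe that if $\widetilde{f}(\widetilde{Y}_i)\cap\widetilde{Y}_i\cap(\mathbb{R}\times(0,1))=\emptyset$ then all rotation numbers have the same sign, and then invoke Kang's criterion (Theorem~\ref{thm:Kang}) that $f(Y_i)\cap Y_i\neq\emptyset$ forces a symmetric fixed point on $Y_i$. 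No equivariant foliation, no parity count; the reversibility is used only through the geometry of the lifted axes $\widetilde{Y}_i$.

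More seriously, your route to a second rotation number is broken. You assert that the infinitely many periodic orbits supplied by Theorem~\ref{thm: refinement of Franks} ``cannot all share a single rotation number, since each rotation class contains only finitely many prime-periodic orbits of a given period''. That inference is invalid: infinitely many orbits with pairwise distinct prime periods can perfectly well all have rotation number $0$ (or any fixed rational). Indeed, in the proof of Theorem~\ref{thm: refinement of Franks} itself, the case where $\mathrm{Rot}(\widetilde{f})$ is a singleton is handled only by passing to a \emph{different} annulus $S^2\setminus\{z_1,z_2\}$, and the non-zero rotation number found there is not a rotation number for the original lift $\widetilde{f}$. The paper's proof of Theorem~\ref{thm: Franks thm for reversible map} confronts the same difficulty and resolves it in Proposition~\ref{prop: reversible-fixed point to periodic point}: when all rotation numbers coincide, either $f(Y)\cap Y=\emptyset$ for some symmetry interval $Y$ (and then a first-return/Kac argument on a small disk near $Y$ produces a genuinely non-zero rotation number), or one removes a symmetric interior fixed point to obtain a new reversible annulus map and reduces to the previous case. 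Your plan contains no analogue of this step, so even granting your equivariant Franks--Le Calvez tool, the deduction of Theorem~\ref{thm: Franks thm for reversible map} would not go through.
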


\begin{remark}
The case $n_0=2$ is just Theorem 1.3 of \cite{Kang}.
Note that even the existence of  non-symmetric periodic points  implies the existence of infinitely many symmetric periodic orbits.
\end{remark}

\bigskip

The proof of Theorem \ref{thm: Franks thm for reversible map} is based on the following two Poincar\'e-Birkhoff type results for reversible maps, which are also interesting for their own sake. The explicit definition of the rotation number is in Section \ref{subsec: rotaion number}.
\begin{theorem}\label{thm: pb for reversible map}
Let $f$ be a reversible  homeomorphism that is isotopic to the identity and $\widetilde{f}$ its lift to the universal covering space.
If there are  two positive  recurrent points with  rotation numbers $\rho^-$ and $\rho^+$ satisfying $\rho^-<0<\rho^+$, then $f$ has at least two symmetric fixed points in the interior of $\mathbb{A}$ with zero rotation number.
\end{theorem}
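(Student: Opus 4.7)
The strategy is to recast the existence of symmetric fixed points as an intersection problem between two involutions and then combine Franks' theorem with a topological intersection count. From $f\circ R=R\circ f^{-1}$ together with $R^{2}=\mathrm{id}$ one computes $(R\circ f)^{2}=R(fR)f=R(Rf^{-1})f=R^{2}f^{-1}f=\mathrm{id}$, so $R':=R\circ f$ is also an orientation-reversing involution of $\mathbb{A}$ and $f=R\circ R'$ decomposes as a product of two involutions. A direct check gives that $z$ is a symmetric fixed point of $f$ if and only if $z\in\mathrm{Fix}(R)\cap\mathrm{Fix}(R')$, so the problem reduces to producing at least two interior points in this intersection.

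By the Brouwer--Ker\'ekj\'art\'o classification, an orientation-reversing involution of the annulus that preserves each boundary component has fixed set equal to a disjoint union of two properly embedded arcs joining the two boundary circles. Both $R$ and $R'$ preserve each boundary (the latter because $f$ is isotopic to the identity), and $R'$ restricted to each boundary circle is an orientation-reversing homeomorphism of $S^{1}$, hence has at least two fixed points. Consequently $\mathrm{Fix}(R)=\gamma_{1}\sqcup\gamma_{2}$ and $\mathrm{Fix}(R')=\gamma_{1}'\sqcup\gamma_{2}'$. The hypothesis $\rho^{-}<0<\rho^{+}$ at positively recurrent points then allows application of Franks' theorem (Theorem~\ref{thm: Franks and le calvez}) to produce at least one interior fixed point of $f$ with zero rotation number.

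The main obstacle is upgrading this single Franks fixed point, which a priori need not be symmetric, into at least two interior points of $(\gamma_{1}\cup\gamma_{2})\cap(\gamma_{1}'\cup\gamma_{2}')$. I plan to work in the universal cover $\widetilde{\mathbb{A}}$, where the preimage of $\mathrm{Fix}(R)$ is the family of vertical reflection axes $L_{n}=\mathrm{Fix}(T^{n}\widetilde{R})=\{n/2\}\times(0,1)$, $n\in\mathbb{Z}$, with $\widetilde{R}$ a chosen lift of $R$ and $T$ the generating deck translation. A symmetric interior fixed point of $f$ with zero rotation number corresponds to a fixed point of $\widetilde{f}$ lying on some $L_{n}$. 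The plan is to combine an $\widetilde{R}$-equivariant maximal isotopy from the identity to $\widetilde{f}$ (obtainable by symmetrization thanks to the reversibility) with Le Calvez's transverse foliation theory and an equivariant Brouwer-type index/parity count, so that the $\widetilde{R}$-invariance of $\mathrm{Fix}(\widetilde{f})$ together with the abundance of parallel reflection axes forces at least two $T$-inequivalent intersections of $\mathrm{Fix}(\widetilde{f})$ with $\bigcup_{n}L_{n}$. The hardest step will be arranging an equivariant Le Calvez structure compatible simultaneously with $T$ and $\widetilde{R}$, and ruling out that the produced intersections concentrate on $\partial\mathbb{A}$; the strict opposing signs of $\rho^{\pm}$ at positively recurrent points are expected to supply the non-degeneracy needed for this control.
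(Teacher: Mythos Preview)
Your proposal is not a proof but an outline whose decisive step is left unfinished. You explicitly label the passage from ``one Franks fixed point'' to ``two interior points of $\mathrm{Fix}(R)\cap\mathrm{Fix}(R')$'' as the main obstacle and then only sketch a plan involving an $\widetilde R$-equivariant maximal isotopy, equivariant Le~Calvez foliations, and an equivariant Brouwer-type index/parity count. None of this is carried out, and making Le~Calvez's transverse foliation simultaneously equivariant under $T$ and $\widetilde R$ is a genuinely nontrivial construction for which you give no argument. There is also a hypothesis mismatch: you invoke Theorem~\ref{thm: Franks and le calvez}, which requires the intersection property, but the present theorem does not assume area preservation (as emphasized in the remark following Corollary~\ref{cor: Pb for reversible map}); and your boundary-circle argument for $\mathrm{Fix}(R')$ does not apply on the open annulus.

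The paper's proof is short and avoids all of this machinery. Fix a lift $\widetilde Y_i$ of a component $Y_i$ of $\mathrm{Fix}(R_{\mathbb A})$. If $\widetilde f(\widetilde Y_i)\cap\widetilde Y_i\cap(\mathbb R\times(0,1))=\emptyset$, then $\widetilde f(\widetilde Y_i)$ lies entirely on one side of $\widetilde Y_i$; by $T$-equivariance the same holds for every translate, so the half-strip to that side of $\widetilde Y_i$ is forward invariant. This barrier forces $p_1\circ\widetilde f^{\,n}(\widetilde z)-p_1(\widetilde z)\ge -1$ (or $\le 1$) for all $n\ge1$, hence every rotation number of a positively recurrent point has a fixed sign, contradicting $\rho^-<0<\rho^+$. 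Thus $\widetilde f(\widetilde Y_i)$ meets $\widetilde Y_i$ in the interior, and Kang's criterion (Theorem~\ref{thm:Kang}) produces a symmetric interior fixed point on $Y_i$; doing this for $i=1,2$ gives the two required points. Note that the two-involution decomposition $f=R\circ R'$ you set up, while correct, is not needed: the barrier argument works directly with $\widetilde f$ acting on the vertical lines $\widetilde Y_i$, and the opposing signs of $\rho^\pm$ enter only to rule out disjointness.
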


\begin{corollary}\label{cor: Pb for reversible map}
Let $f$ be a reversible  homeomorphism that is isotopic to the identity and $\widetilde{f}$ its lift to the universal covering space.
If there are  two positive recurrent points with different rotation numbers $\rho^-<\rho^+$, then there is a symmetric $q$-prime-periodic orbit with rotation number $\frac{p}{q}$ for all irreducible $\frac{p}{q}\in(\rho^-,\rho^+)$.
\end{corollary}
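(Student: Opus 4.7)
The plan is a standard reduction: given an irreducible $p/q \in (\rho^-, \rho^+)$, apply Theorem \ref{thm: pb for reversible map} to the iterate $g = f^q$ equipped with a twisted lift, and read off the conclusion. Let $T(x,y) = (x+1,y)$ generate the deck group of the universal cover, and set $\widetilde{g} := \widetilde{f}^q \circ T^{-p}$, which is a lift of $g$. For any $\widetilde{f}$-recurrent point $z$ with rotation number $\rho$, the identity $\widetilde{g}^n(\widetilde{z}) = \widetilde{f}^{nq}(\widetilde{z}) - (np,0)$ shows that $z$ remains a positively recurrent point of $g$ with $\widetilde{g}$-rotation number $q\rho - p$. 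The assumption $\rho^- < p/q < \rho^+$ rewrites as $q\rho^- - p < 0 < q\rho^+ - p$, so the two positively recurrent points from the hypothesis produce the rotation-number data required by Theorem \ref{thm: pb for reversible map}.

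Next I would transfer the remaining hypotheses to $g$. It is isotopic to the identity and preserves the same finite area as $f$, while iterating $f \circ R = R \circ f^{-1}$ yields $g \circ R = R \circ g^{-1}$. At the level of lifts, choosing $\widetilde{R}(x,y) = (-x, y)$ and using the anti-commutation $\widetilde{R} \circ T = T^{-1} \circ \widetilde{R}$, one verifies
\[\widetilde{g} \circ \widetilde{R} = \widetilde{f}^q \circ T^{-p} \circ \widetilde{R} = \widetilde{f}^q \circ \widetilde{R} \circ T^p = \widetilde{R} \circ \widetilde{f}^{-q} \circ T^p = \widetilde{R} \circ \widetilde{g}^{-1},\]
so the twisted lift $\widetilde{g}$ inherits the reversibility relation. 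Theorem \ref{thm: pb for reversible map}, applied to $(g, \widetilde{g})$, then produces at least two symmetric fixed points $z_0$ of $g$ in the interior of $\mathbb{A}$ with $\widetilde{g}$-rotation number zero, equivalently $\widetilde{f}^q(\widetilde{z}_0) = \widetilde{z}_0 + (p,0)$.

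To conclude, each such $z_0$ must be verified to be a symmetric $q$-prime-periodic point of $f$ with rotation number $p/q$. The rotation number and the fact that $q$ divides the period of $z_0$ are immediate from the displacement formula. Primality comes from $\gcd(p,q) = 1$: any proper divisor $q' \mid q$ with $f^{q'}(z_0) = z_0$ would force $p'/q' = p/q$ for some integer $p'$, contradicting irreducibility. Symmetry of the full $f$-orbit follows from $R(z_0) = z_0$ together with $R \circ f^i = f^{-i} \circ R$, which shows that $R$ permutes the finite set $\{f^i(z_0)\}_{i=0}^{q-1}$.

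\textbf{Main obstacle.} The proof is essentially bookkeeping; the delicate point is tracking how the twist $T^{-p}$ interacts with $\widetilde{R}$ via the anti-commutation $\widetilde{R} T = T^{-1} \widetilde{R}$, so that reversibility survives passage to the twisted lift $\widetilde{g}$. Once this is checked, the corollary reduces to a single application of Theorem \ref{thm: pb for reversible map}.
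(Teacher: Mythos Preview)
Your proposal is correct and follows exactly the paper's route: the paper derives the corollary as a direct consequence of Theorem~\ref{thm: pb for reversible map} applied to $f^q$ with the twisted lift $T^{-p}\circ\widetilde{f}^q$, and you have supplied precisely the bookkeeping (transformation of rotation numbers, persistence of reversibility under the twist via $\widetilde{R}T=T^{-1}\widetilde{R}$, and primality/symmetry of the resulting orbit) that the paper leaves implicit. One harmless slip: you assert that $g$ ``preserves the same finite area as $f$,'' but neither the corollary nor Theorem~\ref{thm: pb for reversible map} assumes area preservation, so that clause should simply be deleted.
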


\begin{remark}
For the two results above, $f$ does not need to be area-preserving.
\end{remark}

\subsection{Applications in Reeb dynamics and celestial mechanics}

It is a classical problem in conservative dynamics to investigate the existence
of periodic motions of Hamiltonian systems restricted to energy levels.
Hofer, Wysocki and Zehnder \cite{HWZ98} used pseudoholomorphic curves in symplectizations
to study dynamically convex Reeb flows on the tight 3-sphere. They found a
special periodic orbit, which is unknotted and has Conley-Zehnder index 3, bounding
a disk-like global surface of section. One of the consequences of this global
section and Franks' theorem is that the Reeb flow admits either 2 or infinitely
many periodic orbits. This result applies to Hamiltonian dynamics on strictly
convex hypersurfaces in $\mathbb{R}^4$.

  Using the same methods in \cite{HWZ98}, Hryniewicz and Salom$\tilde{a}$o \cite{HS16}
proved a similar result for Reeb flows on $\mathbb{R}P^3 = S^3/{\mathbb{Z}_2} = L(2, 1)$ equipped
with the universally tight contact structure: if the Reeb flow is dynamically convex, then it admits an
elliptic and 2-unknotted periodic orbit which is the binding of a rational open book
decomposition. Each page is a 2-disk for the binding and constitutes a rational
global surface of section for the Reeb flow. The main motivation of this result
was to study the circular planar restricted three body problem directly on the
regularized $\mathbb{R}P^3$, without considering the usual lift to the tight 3-sphere.
Recently, Schneider\cite{Sch20} generalizes results in \cite{HWZ98} and \cite{HS16}
to lens spaces $L(p, 1), p > 1$, equipped the standard tight contact structure:
if the Reeb flow on such a contact manifold is dynamically convex, then it admits a closed Reeb orbit $P$ which
is $p$-unknotted and bounds a rational disk-like global surface of section. Moreover,
the Conley-Zehnder index of the $p$-th iterate of $P$ is 3. In fact, $P$ is the binding
of a rational open book decomposition whose pages are rational global surfaces of
section, and Kim \cite{Kim} further considers
the existence of invariant disk-like global surface of section on lens spaces $L(p, 1), p > 1$, equipped with a
tight contact form and an anti-contact involution.

In the last section, combining our refinements of Franks's theorem with the works in  \cite{HS16}, \cite{Sch20}
and \cite{Kim}, we give some precise information about the symmetries of periodic orbits found in Hofer, Wysocki and Zehnder's dichotomy
theorem when the tight 3-sphere is equipped with some additional symmetries, and also the symmetries of periodic orbits on the energy level
of H$\acute{e}$non-Heiles system in celestial mechanics.

\subsection{Organization of the paper} In Section 2, we will give several definitions and preliminary results. In Section 3, we prove a refinement of Franks' Theorem: Theorem \ref{thm: refinement of Franks}. In Section 4, we prove a Poincare-Birkhoof type theorem for reversible homeomorphisms: Theorem \ref{thm: pb for reversible map}, and get Corollary \ref{cor: Pb for reversible map} as a direct corollary. In Section 5, we prove a refinement of Franks' Theorem for reversible homeomorphisms: Theorem \ref{thm: Franks thm for reversible map}. In Section 6, we give the applications of Theorem \ref{thm: refinement of Franks} and Theorem \ref{thm: Franks thm for reversible map} to Reeb dynamics and celestial mechanics.\bigskip

\noindent\textbf{Acknowledgements.} We would like to thank Umberto Hryniewicz, Patrice
Le Calvez and Yiming Long for their helpful conversations and
comments. Hui Liu was partially supported by NSFC (Nos. 12022111, 11771341) and
the Fundamental Research Funds for the Central Universities (No. 2042021kf1059).
Jian Wang was partially supported by NSFC (Nos. 12071231, 11971246). Jingzhi Yan was partially supported by NSFC(Nos. 11901409, 11831012) \bigskip


\section{Preliminaries}

\subsection{Rotation number}\label{subsec: rotaion number}
 We denote  by $\mathbb{A}$ the open  (resp.  the closed) annulus unless an explicit mention, i.e.  $\mathbb{A}=\mathbb{R}/\mathbb{Z}\times (0,1)\cong \{z\in\mathbb{R}^2|1< |z|< 2\}$ ( resp. $\mathbb{A}=\mathbb{R}/\mathbb{Z}\times [0,1]\cong \{z\in\mathbb{R}^2|1\le |z|\le 2\}$),
 by $\pi$ the covering map of the  annulus
\begin{eqnarray*}
\pi\,:\, \mathbb{R}\times(0,1)\quad (\mathrm{resp.}\quad\mathbb{R}\times[0,1])&\rightarrow& \mathbb{A}\\
(x,y)&\mapsto&(x+\mathbb{Z},y),
\end{eqnarray*}
and by  $T$ the generator of the covering transformation group
\begin{eqnarray*}
T\,:\, \mathbb{R}\times(0,1)\quad (\mathrm{resp.}\quad\mathbb{R}\times[0,1])&\rightarrow& \mathbb{R}\times(0,1)\quad (\mathrm{resp.}\quad\mathbb{R}\times[0,1]) \\
(x,y)&\mapsto&(x+1,y).
\end{eqnarray*}
Coordinates are denoted as $z\in\mathbb{A}$ and $\tilde{z}$ in the covering space. Homeomorphisms of $\mathbb{A}$ are denoted by $f$, and their lifts to the covering space are denoted by $\widetilde{f}$.

Consider the homeomorphism $f$ of $\mathbb{A}$ that is isotopic to the identity.  We say that a positively recurrent point $z$ has a \emph{rotation number} $\rho(\widetilde{f}, z)\in \mathbb{R}$ for a lift $\widetilde{f}$ of $f$ to the universal covering space of $\mathbb{A}$, if for every subsequence $\{f^{n_k}(z)\}_{k\geq 0}$ of $\{f^n(z)\}_{n\geq 0}$ which converges to $z$, we have
\[\lim_{k\rightarrow+\infty}\frac{p_1\circ \widetilde{f}^{n_k}(\widetilde{z})-p_1(\widetilde{z})}{n_k}=\rho(\widetilde{f}, z)\]
where $\widetilde{z}\in \pi^{-1}(z)$ is a lift of $z$ and $p_1$ is the standard projection to the first coordinate.
The rotation number is stable by conjugacy  (see \cite{Lecalvez01}).
 In particular, the rotation number $\rho(\widetilde{f}, z)$ always exists and is rational when $z$ is a fixed or periodic point of $f$.

  Let $\mathrm{Rec}^+(f)$ be the set of positively recurrent points of $f$. We denote the set of rotation numbers of positively recurrent points of $f$ as $\mathrm{Rot}(\widetilde{f})$.
A positively recurrent point of $f$ is also a positively recurrent point of $f^q$ for all $q\in \mathbb{N}$ \cite[Appendix]{Wang14}. By the definition of the rotation number, we easily get  the following elementary properties.
\begin{enumerate}\label{prop:ROT}
  \item[1.] $\rho(T^k\circ\widetilde{f}, z)=\rho(\widetilde{f}, z)+k$, and hence $\mathrm{Rot}(T^k\circ \widetilde{f})=\mathrm{Rot}(\widetilde{f})+k$ for every $k\in \mathbb{Z}$;
  \item[2.] $\rho(\widetilde{f}^q, z)=q\rho(\widetilde{f}, z)$, and hence $\mathrm{Rot}(\widetilde{f}^q)=q\mathrm{Rot}(\widetilde{f})$ for every $q\in \mathbb{N}$.
\end{enumerate}

We call a simple closed curve  in $\mathbb{A}$ an \emph{essential circle} if  it is not null-homotopic. We say that $f$ satisfies the \emph{intersection property} if  any essential circle in $\mathbb{A}$ meets its image by $f$. It is easy to see that a homeomorphism $f$ that preserves a finite area satisfies the intersection property.

We need the following Theorem due to Franks \cite{Franks88} when $\mathbb{A}$ is the closed annulus and $f$ has no wandering point,  and  improved by Le Calevez \cite{Lecalvez05} (see also \cite{Wang14}) when $\mathbb{A}$ is the open annulus and $f$ satisfies the intersection property:

\begin{theorem}\label{thm: Franks and le calvez}Let $f$ be a homeomorphism of $\mathbb{A}$ that is isotopic to the identity and satisfies the intersection condition,  and $\widetilde{f}$  one of its lifts to the universal covering space. Suppose that there exist two recurrent points $z_{1}$ and $z_{2}$ such that
$-\infty\leq \rho(\widetilde{f}, z_{1})<\rho(\widetilde{f}, z_{2})\leq +\infty$. Then for
any rational number $p/q\in (\rho(\widetilde{f}, z_{1}),\rho(\widetilde{f}, z_{2}))$
written in an irreducible way, there exists a q-prime-periodic point with rotation number $p/q$.
\end{theorem}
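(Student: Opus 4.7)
The plan is to reduce the statement to a fixed-point assertion and then invoke the equivariant surface-homeomorphism theory underlying the work of Franks and Le Calvez. Given an irreducible $p/q \in (\rho(\widetilde{f}, z_1), \rho(\widetilde{f}, z_2))$, I would pass to the new lift $\widetilde{g} := \widetilde{f}^q \circ T^{-p}$ of $g := f^q$. By the two elementary properties of rotation numbers listed just above the statement, $z_1$ and $z_2$ remain positively recurrent under $g$, with rotation numbers $q\rho(\widetilde{f}, z_i) - p$, one strictly negative and the other strictly positive; moreover $g$ inherits the intersection property from $f$. So it suffices to show that any such $\widetilde{g}$ has a fixed point in the interior of $\mathbb{A}$: indeed such a fixed point projects to a periodic point $z$ of $f$ with rotation number $p/q$ and period dividing $q$, and the irreducibility of $p/q$ forces the period to equal $q$ exactly.

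For the fixed-point step, I would follow Le Calvez's approach via his equivariant foliated version of Brouwer's plane translation theorem. Suppose for contradiction that $\widetilde{g}$ has no fixed point in the interior. Then, after passing to a maximal identity isotopy (in the sense of Jaulent) from $\mathrm{id}$ to $g$, one obtains a singular oriented foliation $\mathcal{F}$ on $\mathbb{A}$ such that every trajectory of the isotopy is positively transverse to $\mathcal{F}$. Lifting $\mathcal{F}$ to $\mathbb{R}\times(0,1)$ gives a non-singular oriented foliation by properly embedded lines, and for each positively recurrent point $z$ one obtains a transverse trajectory $\widetilde{\gamma}_z$ whose asymptotic $T$-translation rate records the rotation number $\rho(\widetilde{g}, z)$.

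The main obstacle is the combinatorial step ruling out straddling rotation numbers in the absence of a fixed point. The intersection property is crucial here: it prevents essential circles from being pushed off themselves by $g$, which restricts the global topology of $\mathcal{F}$ enough to force the lifted transverse trajectories $\widetilde{\gamma}_{z_1}$ and $\widetilde{\gamma}_{z_2}$ to wind around the annulus in a compatible asymptotic direction. This pins $\rho(\widetilde{g}, z_1)$ and $\rho(\widetilde{g}, z_2)$ to the same side of $0$, contradicting the choice of $p/q$. In the closed-annulus, no-wandering-point case this corresponds to Franks' original disk-chain construction; in the open-annulus case with the intersection property it is Le Calvez's refinement via brick decompositions. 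Either version closes the argument and produces the desired $q$-prime-periodic point with rotation number $p/q$.
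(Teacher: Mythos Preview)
The paper does not prove this theorem: it is quoted in the preliminaries as a known result, attributed to Franks \cite{Franks88} for the closed annulus under a non-wandering hypothesis and to Le Calvez \cite{Lecalvez05} (see also \cite{Wang14}) for the open annulus under the intersection property. There is therefore no ``paper's own proof'' to compare against; your outline is in fact a sketch of the very arguments the paper is citing.

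Two comments on the sketch itself. First, the claim that $g=f^{q}$ ``inherits the intersection property from $f$'' is asserted without justification, and it is not obvious. In Le Calvez's actual proof this issue is sidestepped: one builds the transverse foliation $\mathcal{F}$ for $f$ and uses the intersection property of $f$ exactly once, to rule out closed leaves of $\mathcal{F}$. Because the $q$-fold concatenation of the isotopy for $f$ is still positively transverse to $\mathcal{F}$, the \emph{same} foliation serves to analyse $g=f^{q}$, so one never needs the intersection property of the iterate. Your choice to build a fresh maximal isotopy and foliation for $g$ forces you into this unjustified step.

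Second, your last paragraph explicitly defers the substantive part --- why, in the absence of a fixed point, the foliation pins both rotation numbers to the same side of $0$ --- to Franks' disk-chain argument or Le Calvez's brick decomposition. That is an honest pointer to where the work lies, but it means what you have written is a roadmap to the cited proofs rather than an independent argument. The reduction step (passing to $T^{-p}\widetilde{f}^{\,q}$ and noting that a fixed point yields a $q$-prime-periodic point by irreducibility of $p/q$) is correct and standard.
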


\subsection{Transverse foliation and maximal isotopy}
Let $M$ be an oriented surface, and $\mathcal{F}$ an oriented topological foliation on $M$ whose leaves are oriented curves. We say that a path is \emph{positively transverse} to $\mathcal{F}$, if it meets the leaves of $\mathcal{F}$ locally from left to right.
Let $f$ be a homeomorphism on $M$, and $I=(f_t)_{t\in[0,1]}$  an identity isotopy of $f$, i.e. an isotopy joining the identity to $f$.    We say that an oriented foliation $\mathcal{F}$ (without singularity)  is a \emph{transverse foliation}  of $I$ if for every $z\in M$, there is a path  that is homotopic to the trajectory $t\rightarrow f_t(z)$ of $z$ along $I$ with the end points fixed and  is positively transverse to   $\mathcal{F}$.
If $f$ does not have any contractible fixed point associated to $I$, i.e. a fixed point of $f$ whose trajectory along $I$ is null homotopic in $M$, Le Calvez  proved the existence of transverse foliation \cite[Theorem 1.3]{Lecalvez05}.

For the case with contractible fixed point, we call  $K\subset \mathrm{Fix}(f)$ \emph{unlinked} if there is an identity isotopy  $I=(f_t)$ of $f$ such that $K\subset \mathrm{Fix}(I)=\cap_{t\in[0,1]}\mathrm{Fix}(f_t)$. We call an identity isotopy \emph{maximal}, if $\mathrm{Fix}(I)$ is maximal for including among all unlinked sets.  Moreover, if $I$ is a maximal isotopy, $f|_{M\setminus \mathrm{Fix}(I)}$ does not have any contractible fixed point. We consider a singular foliation $\mathcal{F}$ and call it  a \emph{transverse foliation} of $I$, if the set of singularities $\mathrm{Sing}(\mathcal{F})$ is equal to the fixed point set $\mathrm{Fix}(I)$ of $I$, and if $\mathcal{F}|_{M\setminus\mathrm{Sing}(\mathcal{F})}$ is transverse to $I|_{M\setminus \mathrm{Fix}(I)}$. Combine Le Calvez's result and the following theorem, we always get the existence of  a maximal isotopy $I$ and a transverse foliation $\mathcal{F}$.

\begin{theorem}\cite[Corollary 1.2]{BCLR}\label{thm: BCLR}
Let $f$ be a homeomorphism of $M$ and $K$ an unlinked set. Then,  there is a maximal isotopy $I$ such that $K\subset \mathrm{Fix}(I)$.
\end{theorem}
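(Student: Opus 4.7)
The plan is to apply Zorn's lemma to the family $\mathcal{I}_K$ of identity isotopies $I=(f_t)_{t\in[0,1]}$ of $f$ with $K\subseteq\mathrm{Fix}(I)$, partially ordered by $I_1\preceq I_2$ whenever $\mathrm{Fix}(I_1)\subseteq\mathrm{Fix}(I_2)$. By hypothesis $K$ is unlinked, so $\mathcal{I}_K\neq\emptyset$, and any $\preceq$-maximal element of $\mathcal{I}_K$ is, by definition, a maximal isotopy whose fixed-point set contains $K$. Thus everything reduces to verifying that any totally ordered chain in $\mathcal{I}_K$ admits an upper bound.

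For the chain condition, given a chain $(I_\alpha)_{\alpha\in A}$ in $\mathcal{I}_K$, I would first reduce to a countable cofinal subchain $(I_{\alpha_n})_{n\in\mathbb{N}}$ using the second countability of $M$: choose a countable dense subset $\{z_n\}$ of $F=\bigcup_\alpha\mathrm{Fix}(I_\alpha)$ and inductively pick indices $\alpha_n$ with $z_n\in\mathrm{Fix}(I_{\alpha_n})$ and $\mathrm{Fix}(I_{\alpha_n})\subseteq\mathrm{Fix}(I_{\alpha_{n+1}})$. I would then build the upper bound as a limit $I_*$ of the $I_{\alpha_n}$: at step $n$, modify $I_{\alpha_n}$ on small pairwise disjoint neighborhoods of the new fixed points $\mathrm{Fix}(I_{\alpha_{n+1}})\setminus\mathrm{Fix}(I_{\alpha_n})$ by a local surgery that replaces the isotopy by a stationary one there while preserving its endpoints and its behavior away from these neighborhoods. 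By choosing the neighborhoods with summable diameters in the $C^0$ topology on $[0,1]\times M$, the sequence of modified isotopies converges uniformly to a well-defined identity isotopy $I_*$ of $f$ with $F\subseteq\mathrm{Fix}(I_*)$, which is the desired upper bound.

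An application of Zorn's lemma to $(\mathcal{I}_K,\preceq)$ then produces a maximal element, which is by construction a maximal isotopy containing $K$ in its fixed-point set.

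The main obstacle is the local surgery step used in the chain construction: one must show that any point $z_0\notin\mathrm{Fix}(I)$ whose trajectory $t\mapsto f_t(z_0)$ is null-homotopic in $M\setminus\mathrm{Fix}(I)$ relative to its endpoints can be ``absorbed'' into the fixed-point set of $I$ via a modification with arbitrarily small support, without altering the time-one map $f$ or any already existing fixed point. This is the substantive content of the Béguin--Crovisier--Le Roux absorption procedure, and it relies on the Schoenflies theorem to trivialize a disk containing the null-homotopic trajectory, followed by an explicit isotopy interpolation inside that disk. This is where the topological subtlety of working with surface isotopies genuinely enters, and the reason the theorem is not formal.
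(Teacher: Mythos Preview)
The paper does not prove this statement at all: it is quoted verbatim as \cite[Corollary 1.2]{BCLR} and used as a black box in the subsequent arguments. There is therefore no ``paper's own proof'' to compare your proposal against.

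As for the content of your sketch, the Zorn's lemma framework is indeed the strategy of B\'eguin--Crovisier--Le~Roux, but two points deserve comment. First, the partial order you write down (mere inclusion of fixed-point sets) is not the one actually used in \cite{BCLR}; their order also requires that the two isotopies be homotopic relative to the smaller fixed set, and without this refinement the chain argument does not go through, because a limit of isotopies with growing fixed sets need not have the correct time-one map or be in the right homotopy class. Second, you correctly identify the absorption step as the crux and then defer it entirely to \cite{BCLR}; since that step \emph{is} the theorem, your proposal is really an outline of the architecture rather than a proof. In the context of the present paper this is moot, since the authors simply cite the result, but if your intent was to supply an independent proof you would need to carry out the local surgery in detail, and the Schoenflies-plus-interpolation description you give understates the difficulty (one must control the surgery uniformly over an increasing union while keeping the time-one map fixed globally).
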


In particulary, for  area-preserving homeomorphisms of the sphere, the dynamics of the transverse foliation is quite simple: there is neither a closed leaf nor a leaf from and toward  the same singularity,  and hence every leaf joins one singularity to another singularity.

\subsection{Reversible maps and symmetric periodic points}
We consider the reflection $R$ and a reversible map $f$ of the $R$-invariant domain $\Omega\subset \mathbb{R}^2$.  We denote by $R_{\Omega}$ the restriction of $R$ to $\Omega$. The fixed points set $\mathrm{Fix}(R_{\Omega})$  is composed of disjoint intervals $Y_i$, i.e. $\mathrm{Fix}(R_{\Omega})=\sqcup_i Y_i$.    Kang \cite{Kang} gave a necessary and sufficient condition  for the existence of a symmetric fixed point on any interval $Y_i$.

\begin{theorem} \cite[Theorem 1.5]{Kang}\label{thm:Kang}
Let $f:\Omega\to \Omega$ be a reversible map isotopic to the identity. Then $f$ has a symmetric fixed point on $Y_i$ if and only if $f(Y_i)\cap Y_i\ne \emptyset$. And there exists an interior symmetric fixed point on $Y_i$ if and only if $f(Y_i)\cap Y_i\cap \mathrm{Int}(\Omega)\ne \emptyset$.
\end{theorem}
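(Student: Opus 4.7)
The necessity direction in both equivalences is immediate: a symmetric fixed point $z$ on $Y_i$ satisfies $R(z)=z=f(z)$, witnessing $z\in f(Y_i)\cap Y_i$, and the interior case tracks whether $z\in\mathrm{Int}(\Omega)$. So the content lies in the two sufficiency statements, and the plan will produce the fixed point in the interior of $\Omega$ whenever the witness for $f(Y_i)\cap Y_i$ can be chosen there.

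The key algebraic object is the involution $\tau:=R\circ f$. From the reversibility identity $f\circ R=R\circ f^{-1}$ one rearranges to $R\circ f\circ R=f^{-1}$, whence $(R\circ f)^2=R\circ f\circ R\circ f=f^{-1}\circ f=\mathrm{id}$; since $f$ preserves orientation and $R$ reverses it, $\tau$ is an orientation-reversing continuous involution of $\Omega$. The first step is a reformulation: for $z\in Y_i$ one has $R(z)=z$, so $\tau(z)=z\iff f(z)=R(z)=z$. Thus symmetric fixed points of $f$ on $Y_i$ coincide with $\mathrm{Fix}(\tau)\cap Y_i$, and the task reduces to showing that $f(Y_i)\cap Y_i\ne\emptyset$ forces $\mathrm{Fix}(\tau)\cap Y_i\ne\emptyset$.

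Next I would prove the structural lemma: if $y\in Y_i$ and $f(y)\in\mathrm{Fix}(R_\Omega)$, then $f^2(y)=y$. Indeed, $f(y)\in\mathrm{Fix}(R)$ gives $\tau(y)=R(f(y))=f(y)$; applying $\tau$ yields $y=\tau^2(y)=\tau(f(y))=R(f^2(y))$, hence $f^2(y)=R(y)=y$. Setting $Z:=Y_i\cap f^{-1}(\mathrm{Fix}(R_\Omega))$, a closed nonempty subset of the interval $Y_i$, the restriction $f|_Z$ is a continuous involution of $Z$. If $f|_Z$ preserves some connected component of $Z$, then by the 1-dimensionality of $Y_i$ the restriction is either order-preserving on that component (hence the identity, giving an arc of symmetric fixed points) or order-reversing (a decreasing involution of an interval, with a unique midpoint fixed point). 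In either case we are done.

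The main obstacle is the remaining case: $f|_Z$ swaps two distinct components of $Z$. Pick $y_0,y_1\in Y_i$ in different components with $f(y_0)=y_1$ and $f(y_1)=y_0$, let $\gamma\subset Y_i$ be the subarc from $y_0$ to $y_1$, and set $C:=\gamma\cup\tau(\gamma)$. Since $R$ fixes $\gamma$ pointwise, $\tau(\gamma)=R(f(\gamma))$ joins $y_1$ to $y_0$, and $\tau$ swaps the two arcs setwise, so $C$ is a $\tau$-invariant closed curve. If $C$ is simple and bounds a disk $D\subset\Omega$, one excludes $\tau(D)=\Omega\setminus D$ on homeomorphism-type grounds, so $\tau|_D$ is an orientation-reversing involution of a disk; Brouwer--Ker\'{e}kj\'{a}rt\'{o} tells us it is topologically a reflection, with fixed set an arc meeting $\partial D=C$ at two points. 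Since $\{y_0,y_1\}$ is a free $\tau$-orbit, these two boundary fixed points must be the midpoints of $\gamma$ and $\tau(\gamma)$ respectively, and the midpoint of $\gamma\subset Y_i$ is the symmetric fixed point of $f$ we seek. The real work will be removing the simplifying hypotheses: handling possible self-intersections of $C$ by extracting an innermost simple subloop, accommodating non-simply-connected $\Omega$ in which $C$ need not bound a disk inside $\Omega$ (perhaps by working in a suitable $\tau$-invariant subdomain or by doubling via $R$), and checking that if $y_0$ can be chosen in $\mathrm{Int}(\Omega)$ then so can the produced fixed point. I expect these topological issues, rather than the algebraic setup, to carry the weight of the proof.
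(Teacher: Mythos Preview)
The paper does not prove this statement: Theorem~\ref{thm:Kang} is quoted verbatim from \cite[Theorem~1.5]{Kang} and used as a black box in the proofs of Theorem~\ref{thm: pb for reversible map} and Proposition~\ref{prop: reversible-fixed point to periodic point}. There is therefore no ``paper's own proof'' to compare your proposal against.

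As for your outline itself, the algebraic setup is correct and is indeed the standard one: $\tau=R\circ f$ is an orientation-reversing involution, and symmetric fixed points of $f$ on $Y_i$ are exactly $\mathrm{Fix}(\tau)\cap Y_i$. One small slip: your set $Z=Y_i\cap f^{-1}(\mathrm{Fix}(R_\Omega))$ need not be $f$-invariant, since $f(y)$ could land in a different component $Y_j$; you want $Z'=Y_i\cap f^{-1}(Y_i)$, which \emph{is} $f$-invariant by the $f^2(y)=y$ computation, and is nonempty precisely under the hypothesis $f(Y_i)\cap Y_i\ne\emptyset$. With that fix, your case analysis (a component of $Z'$ preserved versus swapped) is the right dichotomy. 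The part you flag as ``the real work''---extracting a simple $\tau$-invariant loop, getting it to bound a $\tau$-invariant disk in a possibly non-simply-connected $\Omega$, and tracking interiority---is exactly where the substance of Kang's argument lies; your sketch does not yet supply it, and the Brouwer--Ker\'ekj\'art\'o step as written presumes the disk exists inside $\Omega$, which is not automatic. So the proposal is a sound plan with honestly acknowledged gaps, but those gaps are the heart of the matter, not peripheral cleanup.
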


  In particular, the annulus $\mathbb{A}$ is invariant by $R$, and the fixed points set is composed of two disjoint interval $Y_1$ and $Y_2$. Choose a lift $\widetilde{Y}$ of $Y=Y_1$ (or $Y_2$) in the universal covering space of $\mathbb{A}$, there is a lift $\widetilde{R}$ of $R_{\mathbb{A}}$ that fixes $\widetilde{Y}$. Moreover, $\widetilde{R}$ is a reflection with axis $\widetilde{Y}$. One should note the following two facts:
\begin{itemize}
\item[-] $T\circ \widetilde{R}=\widetilde{R}\circ T^{-1}$.
\item[-] Let $f$ be a reversible map and $\widetilde{f}$ a lift of $f$. Then, $\widetilde{f}\circ\widetilde{R}=\widetilde{R}\circ\widetilde{f}^{-1}$.
\end{itemize}
The first fact is obvious. For the second fact, since $\pi\circ \widetilde{f}\circ\widetilde{R}=f\circ\pi\circ\widetilde{R}=f\circ R_{\mathbb{A}}$, and $\pi\circ\widetilde{R}\circ\widetilde{f}^{-1}=R_\mathbb{A}\circ\pi\circ\widetilde{f}^{-1}=R_\mathbb{A} \circ f^{-1}$,
$\widetilde{f}\circ\widetilde{R}$ and $\widetilde{R}\circ\widetilde{f}^{-1}$ are both lifts of $f\circ R_\mathbb{A}=R_\mathbb{A}\circ f^{-1}$. Furthermore, we suppose that $\widetilde{f}\circ\widetilde{R}\circ\widetilde{f}\circ\widetilde{R}=T^l$ for some $l\in\mathbb{Z}$. Then we have
$$T^{2l}=\widetilde{f}\circ\widetilde{R}\circ\widetilde{f}\circ\widetilde{R}\circ\widetilde{f}\circ\widetilde{R}\circ\widetilde{f}\circ\widetilde{R}=\widetilde{f}\circ\widetilde{R}\circ T^l\circ\widetilde{f}\circ\widetilde{R}=\widetilde{f}\circ\widetilde{R}\circ T^{2l}\circ \widetilde{R}\circ\widetilde{f}^{-1}=T^{-2l}\circ \widetilde{f}\circ\widetilde{R}\circ\widetilde{R}\circ\widetilde{f}^{-1}=T^{-2l}.$$
It implies that $l=0$, which completes the proof.

\section{Proof of Theorem \ref{thm: refinement of Franks}}
In this section, $f$ is  a homeomorphism of the closed or open annulus that is isotopic to the identity and preserves a finite area. Suppose that $f$ has a $k$-periodic point and $(k,n_0)=1$, we will prove
 \[\sharp \left\{\bigcup_{(k\,',n_0)=1} \mathrm{Per}_{k\,'}(f) \right\}=\infty.\]
The rest part of Theorem \ref{thm: refinement of Franks} is a direct corollary.

\begin{proof}[Proof of  Theorem \ref{thm: refinement of Franks}]
If $\sharp\mathrm{Per}_k(f)=+\infty$, we have nothing to do. Hence we assume that $\sharp\mathrm{Per}_k(f)<+\infty$. Let $g=f^k$. If $\sharp\mathrm{Fix}(g)=+\infty$, then there exists $k'\,|\,k$ with $(k',n_0)=1$ such that $\sharp\mathrm{Per}_{k'}(f)=+\infty$. We are done. Therefore, we can assume that $1\leq\sharp\mathrm{Fix}(g)<+\infty$.

We consider the rotation number $\rho(\widetilde{f}, z)$, where  $\widetilde{f}$ is a lift of $f$ and $z$ is a positively recurrent point of $f$.
We will divided the proof into the following two cases:

\begin{itemize}
\item[i)]Suppose  there exists $z_1,z_2$ with rotation numbers $\rho(\widetilde{f}, z_1)< \rho(\widetilde{f}, z_2)$.
    By Theorem \ref{thm: Franks and le calvez}, $f$ has a $q$-periodic orbit with rotation number $p/q$ for all irreducible $\frac{p}{q}\in( \rho(\widetilde{f}, z_1), \rho(\widetilde{f}, z_2) )$. We get the result by choosing irreducible $\frac{p}{q}\in( \rho(\widetilde{f}, z_1), \rho(\widetilde{f}, z_2) )$ such that $(q,n_0)=1$.

\item[ii)]   Suppose that $\rho(\widetilde{f}, z)$ is a constant   for all positively recurrent $z$. In particular, $\rho(\widetilde{f}, z)=\frac{\ell}{k}$ with $\ell\in \mathbb{Z}$, for $z\in \mathrm{Per}_k(f)$.

 If $\mathbb{A}$ is a closed annulus, we shrink each boundary to a point and get a sphere; while $\mathbb{A}$ is an open annulus, we consider the end  compactifications and also get a sphere. Denote the sphere by $S^2$, and the two points not in $\mathbb{A}$ by $N$ and $S$. Moreover, $g$ induces an area-preserving homeomorphism of $S^2$ that fixes both $N$ and $S$. We still denote by $g$ the induced homeomorphism of $S^2$

  By Theorem \ref{thm: BCLR}, there is a maximal isotopy $I=(g_t)_{t\in [0,1]}$ that fixes both $N$ and $S$.
 We consider the lift $\widetilde{g}_t$ of $g_t$ to the universal covering space of $\mathbb{A}$ and call $\widetilde{g}=\widetilde{g}_1$ the lift of $g$ associated to $I$. By composing a rotation of the sphere to the isotopy if necessary, we can suppose $\rho(\widetilde{g},z)=0$ for all  positively recurrent $z\in\mathbb{A}$.

 We will first prove  if $I$ fixes only two points $N$ and $S$, then there is a positively recurrent point $z\in\mathbb{A}$ such that $\rho(\widetilde{g},z)\ne 0$, and hence $I$ has at least three fixed points. In this paragraph, we suppose $I$ has only two fixed points $N$ and $S$ and  consider the transverse foliation $\mathcal{F}$  of $I$. The transverse foliation $\mathcal{F}$ has exactly two singularities $N$ and $S$ and hence all the leaves are topological lines from one singularity to the other singularity.  We choose one leaf $\gamma$ and one of its lifts $\widetilde{\gamma}$.   Note that the oriented curve $\widetilde{\gamma}$ separates the covering space into two parts, and  $\widetilde{g}$ maps the  part on the right of $\widetilde{\gamma}$  to a proper subset of itself.  we can choose a small disk $U$ near $\gamma$ and $\widetilde{U}$ the lift of $U$ near $\widetilde{\gamma}$, such that $\widetilde{U}$ is on the left of $\widetilde{\gamma}$ and $\widetilde{g}(\widetilde{U})$ is on the right of $\widetilde{\gamma}$. We can also assume  the diameter of  $p_1(\widetilde{U})$ is smaller than $\frac{1}{2}$  by choosing $U$ small enough, where $p_1$ is the projection to the first factor. We suppose $p_1(\widetilde{z})-p_1(\widetilde{z'})>\frac{1}{2}$ for all $\widetilde{z}\in \widetilde{U}$ and $\widetilde{z'}\in \widetilde{U}'$, where $\widetilde{U}'$ is any lift of $U$ on the right of $\widetilde{\gamma}$ (The proof in the case $p_1(\widetilde{z})-p_1(\widetilde{z'})<-\frac{1}{2}$ is similar).
  By Poincar\'e's recurrent theorem, almost all points in $U$ are recurrent by $g$.  We denote by $\Phi$  the first return map on  $U\cap \mathrm{Rec}^+(g)$ and by $\tau$ the first return time.
 Let $m(z)=p_1\circ\widetilde{g}^{\tau (z)}(\widetilde{z})-p_1(\widetilde{z})$, for $z\in U\cap \mathrm{Rec}^+(g)$.Then $m(z)> \frac{1}{2}$, because the positive orbit of $\widetilde{z}$ by $\widetilde{ g}$ will on the right of $\widetilde{\gamma}$.
  Let $\tau_n(z)=\sum\limits_{i=0}^{n-1}\tau(\Phi^i(z))$ and $m_n(z)=\sum\limits_{i=0}^{n-1}m(\Phi^i(z))$. Then, $\frac{m_n(x)}{\tau_n(x)}\to \rho(\widetilde{g},z)$ as $n\to+\infty$, if the rotation number exists.
 Note that
  \[\frac{m_n(x)}{\tau_n(x)}=\frac{m_n(x)}{n}\frac{n}{\tau_n(x)},\quad  \frac{m_n(x)}{n}>\frac{1}{2}.\]
   By  Kac's Theorem \cite{Wright}, $\tau\in L^1(U,\mathbb{R})$. Then, $\frac{\tau_n}{n}$ is convergent a.e. by Birkhoff's ergordic theorem.
    Therefore, $\rho(\widetilde{g},z)>0$  a.e. $z\in U$.

 Let $\mathcal{F}$ be the transverse foliation of $I$, and choose a leaf $\gamma$ joining two singularities $z_1$ and $z_2$. We consider the annulus $S^2\setminus\{z_1, z_2\}$, the restricted homeomorphism $g$  on the new annulus, and the lift $\widetilde{g}$ of $g$ to the universal covering space of the new annulus associate to $I$. Since $I$ has at least three fixed points, there is a fixed point $z$ such that $\rho(\widetilde{g},z)=0$. By a similar discussion as in the previous paragraph, there is a recurrent point $z'$ such that $\rho(\widetilde{g},z')\ne 0$. By Theorem \ref{thm: Franks and le calvez}, for all irreducible $\frac{p}{q}$ between $0$ and $\rho(\widetilde{g},z')\ne 0$, $g$ has a $q$-prime-periodic orbit with rotation number $\frac{p}{q}$. By choosing $(q,n_0)=1$, we finish the proof. \qedhere.
 \end{itemize}
\end{proof}

\begin{remark}
  The idea to get  non-zero rotation numbers by the transverse foliation is  from P. Le Calvez (see \cite{Lecalvez05} for example).
\end{remark}

\section{A Poincar\'e-Birkhoff type theorem for reversible homeomorphism}

In this section, we consider reversible  homeomorphism $f$  of the closed or open annulus that is isotopic to the identity. Let $\widetilde{f}$ be a lift of $f$ to the universal covering space.  We will prove a Poincar\'e-Birkhoff type theorem: Theorem \ref{thm: pb for reversible map}.  Corrollary \ref{cor: Pb for reversible map} is its direct corollary by considering  $f^q$ and the lift  $T^{-p}\circ\widetilde{f}^q$, where $T$ is the generater of the covering transformation group as in Section \ref{subsec: rotaion number}.

\begin{proof}[Proof of Theorem \ref{thm: pb for reversible map}]
Let $Y_1$ and $Y_2$ be the two interval of the fixed points set of the reflection $R_{\mathbb{A}}$.
   For any lift $\widetilde{Y}_i$ of $Y_i$, $i=1,2$, we will prove by contradiction that $\widetilde{f}(\widetilde{Y}_i)\cap \widetilde{Y}_i\cap\mathbb{R}\times(0,1)\ne \emptyset$.
 Suppose that $\widetilde{f}(\widetilde{Y}_i)\cap \widetilde{Y}_i\cap\mathbb{R}\times(0,1)= \emptyset$,  then $\widetilde{f}(\widetilde{Y_i})$ is on the left or on the right of $\widetilde{Y}_i$. Suppose that it is on the right, the other case can be treated similarly.
 Then,
\[ p_1\circ\widetilde{f}^n(\widetilde{z})-p_1(\widetilde{z})\ge -1, \text{for all } n\ge 1.\]
 and hence, $\rho(\widetilde{f}, \widetilde{z})\ge 0$ for all recurrent $z$. We get a contradiction.

 We choose $\widetilde{Y}_i$ and a lift $\widetilde{R}$ of $R_\mathbb{A}$ that fixes every point of $\widetilde{Y}_i$. Then $\widetilde{f}\circ\widetilde{R}=\widetilde{R}\circ \widetilde{f}^{-1}$.
We get the existence of a fixed point of $\widetilde{f}$ on $\widetilde{Y}_i\cap \mathbb{R}\times (0,1)$  by Theorem \ref{thm:Kang}.
Therefore, $f$ have a fixed point in $Y_1$ and a fixed point in $Y_2$, both fixed points are in the interior of $\mathbb{A}$ and have zero rotation numbers.
\end{proof}

\section{Proof of Theorem \ref{thm: Franks thm for reversible map}}

In this section, we suppose that $f$ is a reversible homeomorphism of the closed or the open annulus that is isotopic to the identity and preserves a finite area. We will prove the following proposition and then get Theorem \ref{thm: Franks thm for reversible map} as a corollary.

\begin{proposition}\label{prop: reversible-fixed point to periodic point}
  Suppose that $f$ has a  fixed point, and has at most finitely many symetric fixed points. Then, there exists $\rho'\ne 0$ such that for all irreducible $\frac{p}{q}$ between $0$ and $\rho'$, $f$ has symmetric $q$-prime-periodic points.
\end{proposition}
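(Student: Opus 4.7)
The strategy is to reduce the problem to Corollary \ref{cor: Pb for reversible map}. By property~1 of Section \ref{subsec: rotaion number} I may choose the lift $\widetilde{f}$ so that the given fixed point $z_0$ satisfies $\rho(\widetilde{f},z_0)=0$; this $z_0$ is (trivially) positively recurrent. If I can produce a second positively recurrent point $z'$ with $\rho':=\rho(\widetilde{f},z')\neq 0$, then applying Corollary \ref{cor: Pb for reversible map} to the pair $z_0,z'$ (with $\rho^-=\min\{0,\rho'\}$ and $\rho^+=\max\{0,\rho'\}$) yields a symmetric $q$-prime-periodic orbit with rotation $p/q$ for every irreducible $p/q$ strictly between $0$ and $\rho'$, which is exactly the conclusion of the proposition.

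I therefore argue by contradiction that such a $z'$ must exist: suppose every positively recurrent point of $f$ has rotation $0$ with respect to $\widetilde{f}$, and I will show this forces infinitely many symmetric fixed points, contradicting the hypothesis. The setup follows Case~(ii) of the proof of Theorem \ref{thm: refinement of Franks}. Compactify $\mathbb{A}$ into a sphere $S^2$ with two added points $N,S$ fixed both by the extension $\hat{f}$ of $f$ and by the extension $\hat{R}$ of the reflection $R$; the fixed set $\mathrm{Fix}(\hat{R})$ is a topological circle $C=\{N,S\}\cup Y_1\cup Y_2$. Apply Theorem \ref{thm: BCLR} to obtain a maximal isotopy $I$ of $\hat{f}$ with $\{N,S\}\subset\mathrm{Fix}(I)$; after composing $I$ with a suitable rotation of $S^2$ one arranges that its associated lift coincides with $\widetilde{f}$. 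The Kac-Birkhoff leaf argument from Case~(ii) shows that if $\mathrm{Fix}(I)=\{N,S\}$ then some positively recurrent point of $f$ would have nonzero rotation, contradicting the standing assumption; hence $\mathrm{Fix}(I)$ contains at least one interior fixed point of $\hat{f}$. Iterating this argument by slicing along a leaf of the transverse foliation $\mathcal{F}$ and running the leaf argument in each resulting sub-annulus produces an infinite sequence of distinct $\hat{f}$-fixed points in $\mathbb{A}$.

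The main obstacle, and the only place where reversibility enters essentially, is to ensure that the iteratively produced fixed points are symmetric, i.e.\ lie on the $\hat{R}$-invariant circle $C$. The identity $\widetilde{f}\circ\widetilde{R}=\widetilde{R}\circ\widetilde{f}^{-1}$ implies that if $I=(\hat{f}_t)$ is an isotopy of $\hat{f}$ then $(\hat{R}\circ\hat{f}_t^{-1}\circ\hat{R})_{t\in[0,1]}$ is also an isotopy of $\hat{f}$, with fixed set $\hat{R}(\mathrm{Fix}(I))$. I plan to exploit this symmetry to select $I$ itself to be $\hat{R}$-equivariant, by applying Theorem \ref{thm: BCLR} to an $\hat{R}$-invariant unlinked enlargement of $\{N,S\}$; with such an $I$, the transverse foliation $\mathcal{F}$ can be chosen so that $\hat{R}$ permutes its leaves (reversing orientation), the leaf selected at each step of the iteration can be taken $\hat{R}$-equivariantly, and the new singularities therefore lie on $C$, yielding infinitely many symmetric fixed points and the desired contradiction. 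An alternative route avoids the $\hat{R}$-equivariance of $I$ altogether by invoking Theorem \ref{thm:Kang} at each step: a nonzero rotation produced in a sub-annulus by the leaf argument would force some translate of $\widetilde{Y}_i$ to meet $\widetilde{Y}_i$ in the interior, and Kang's criterion then supplies a new symmetric fixed point on $Y_i$ at every stage of the iteration.
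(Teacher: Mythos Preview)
Your reduction to Corollary \ref{cor: Pb for reversible map} via a second recurrent point of nonzero rotation matches the paper exactly. The gap is in how you propose to manufacture that nonzero rotation. Route (a)---choosing an $\hat R$-equivariant maximal isotopy and transverse foliation---is not available as stated: Theorem \ref{thm: BCLR} gives no control over equivariance, and Le Calvez's equivariant foliation theorem concerns groups \emph{commuting} with $f$, not an orientation-reversing involution conjugating $f$ to $f^{-1}$; producing an $\hat R$-symmetric transverse foliation would be a new result. Route (b) is muddled: the sub-annuli obtained by slicing along generic leaves of $\mathcal F$ are not $R$-invariant, so neither Theorem \ref{thm:Kang} nor Corollary \ref{cor: Pb for reversible map} applies inside them, and the fixed points they yield have no reason to lie on $Y_1\cup Y_2$.

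The paper bypasses transverse foliations entirely in the reversible setting and uses the reflection axes themselves as the ``free'' lines for the Kac--Birkhoff argument. Either some component $Y$ of $\mathrm{Fix}(R_{\mathbb A})$ satisfies $Y\cap f(Y)=\emptyset$; then a lift $\widetilde Y$ is disjoint from $\widetilde f(\widetilde Y)$, so $\widetilde f$ maps one side of $\widetilde Y$ into itself, and the Kac--Birkhoff first-return argument run on a small disk near $Y$ yields a recurrent point of strictly positive (or negative) rotation. Or both $Y_i$ meet their images in the interior; then Theorem \ref{thm:Kang} produces an interior symmetric fixed point, and by the finiteness hypothesis one may choose a symmetric fixed point $z_1$ on some $Y_i$ such that the sub-interval of $Y_i$ between $z_1$ and the outer boundary contains no other fixed point. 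Puncturing at $z_1$ and collapsing the inner boundary gives a new $R$-invariant annulus in which that sub-interval is a component of the reflection axis and, by the contrapositive of Theorem \ref{thm:Kang}, is disjoint from its $f$-image---reducing to the first case. The idea you are missing is that the symmetry axis already furnishes the line needed to drive the rotation-number argument, so no $R$-equivariant foliation is required.
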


\begin{proof}
Fix a lift $\widetilde{f}$ of $f$ that has a fixed point. The rotation number of any fixed point, whose lifts are fixed points  of $\widetilde{f}$, is zero.  Our idea is to prove the existence of positively recurrent points with different  rotation numbers, which guarantees the existence of  symmetric periodic orbits by Corollary \ref{cor: Pb for reversible map}.

 If there is a positively recurrent point (in the interior or on the boundary) with  non-zero rotation number, then there is a   symmetric $q$-prime-periodic orbit with rotation number $\frac{p}{q}$ for all irreducible $\frac{p}{q}$ between the two different rotation number by Corollary \ref{cor: Pb for reversible map}.

Now, we suppose that all the fixed point of $f$ has  rotation number zero. Recall that almost all recurrent points of $f$ have rotation numbers \cite[Theorem 1]{Lecalvez01}.  We will deduce the existence of recurrent points with different rotation numbers in two cases.

\begin{itemize}
\item[i)] Suppose that there is a connected component of the  fixed point set of $R_{\mathbb{A}}$ such that $Y\cap f(Y)=\emptyset$, where $Y$ is the component itself when $\mathbb{A}$ is  an open annulus, and is the maximal open interval in  this component when $\mathbb{A}$ is a closed  annulus.
    We choose a lift $\widetilde{Y}$ of $Y$. Then, $\widetilde{f}(\widetilde{Y})\cap\widetilde{Y}=\emptyset$. We suppose that $\widetilde{f}(\widetilde{Y})$ is on the right of $\widetilde{Y}$, we will deduce the existence of a positive rotation number. (In the case $\widetilde{f}(\widetilde{Y})$ is on the left of $\widetilde{Y}$, the proof is similar, but we can get  the existence of a negative rotation number.)

    Choose a small disk $U$ near $Y$ (with distance $<\frac{1}{4}$) such that one of its lift $\widetilde{U}$ is on the left of $\widetilde{Y}$ and $\widetilde{f}(\widetilde{U})$ is on the right of $\widetilde{Y}$. Because $\widetilde{f}(\widetilde{Y})$ is on the right of $\widetilde{Y}$, $\widetilde{f}(\mathrm{Right(\widetilde{Y})})\subset\mathrm{Right}(\widetilde{Y})$ and $\widetilde{f}(\mathrm{Right(T^k\widetilde{Y})})\subset\mathrm{Right}(T^k\widetilde{Y})$ for all $k\in\mathbb{Z}$.
    Recall that $f$ preserves a finite area. By Poincar\'{e}'s recurrence theorem,  almost all point are recurrent. We denote by $\Phi$  the first return map on  $U\cap \mathrm{Rec}^+(f)$ and by $\tau$ the first return time.
    Let $m(z)=p_1\circ\widetilde{f}^{\tau (z)}(\widetilde{z})-p_1(\widetilde{z})$, for $z\in U\cap \mathrm{Rec}^+(f)$.
    Then $m(z)> \frac{1}{2}$, because the positive orbit of $\widetilde{z}$ by $\widetilde{ f}$ will never go to the left of $\widetilde{Y}$.
    Let $\tau_n(z)=\sum_{i=0}^{n-1}\tau(\Phi^i(z))$ and $m_n(z)=\sum_{i=0}^{n-1}m(\Phi^i(z))$. Then, $\frac{m_n(x)}{\tau_n(x)}\to \rho(\widetilde{f},z)$ as $n\to+\infty$, if the rotation number exists.
    Note that
    \[\frac{m_n(x)}{\tau_n(x)}=\frac{m_n(x)}{n}\frac{n}{\tau_n(x)},\quad  \frac{m_n(x)}{n}>\frac{1}{2}.\]
    By  Kac's Theorem \cite{Wright}, $\tau\in L^1(U,\mathbb{R})$. Then, $\frac{\tau_n}{n}$ is convergent a.e. by Birkhoff's ergordic theorem.
    Therefore, $\rho(\widetilde{f},z)>0$ for a.e. $z\in U$.

\item[ii)] Suppose that $Y_i\cap f(Y_i)\cap \mathrm{Int}(\mathbb{A})\ne \emptyset$, where $\mathrm{Fix}(R_{\mathbb{A}})=Y_1\sqcup Y_2$. By Theorem \ref{thm:Kang}, $f$ have symmetric  fixed points in the interior of $\mathbb{A}$.
 Let $z_1=(0,y)$ be a fixe point  of $f$, such that the symmetry interval $Y$ between $z_1$ and the outer-boundary contains no other fixed point. By removing $z_1$ and contracting the inner boundary to one point, we get an annulus and the induced reflection and  reversible map.
By repeating the discussion in the previous case, we get the existence of different rotation numbers and finish the proof.\qedhere
    \end{itemize}
\end{proof}

\begin{proof}[Proof of Theorem \ref{thm: Franks thm for reversible map}]
We consider  $g=f^k$. A symmetric fixed point of $g$  is a symmetric periodic point of $f$ with period $k'\mid k$, and hence $(k',n_0)=1$. A symmetric $q$-periodic point of $g$ on $Y$ induces a symmetric periodic orbit of $f$ with period $k'\mid kq$. Note that $k'\nmid n_0$ if $q\nmid n_0$. We get the result as a corollary.
\end{proof}

\section{Applications in Reeb dynamics and celestial mechanics}

Firstly, we review some basic concepts in contact geometry, cf., \cite{Sch20} and \cite{Kim}.
Recall that a contact three-manifold is a three-manifold $M$ equipped with a maximally non-integrable
hyperplane distribution $\xi$, called a contact structure. If $\xi$ is co-oriented, then there
exists a global one-form $\lambda$ on M, called a contact form (defining $\xi$), such that
$\xi = ker \lambda$. Note that if $\lambda$ is a contact form, then $\lambda \wedge d\lambda$ is non-vanishing
and that for every smooth non-vanishing function $f$ on $M$, the one-form $f\lambda$ is a contact
form defining the same contact structure as $\lambda$.

A contact structure $\xi$ on M is called overtwisted if there exists an embedded
disk $D\hookrightarrow M$ such that $T_z(\partial D) \subset \xi_z$ and $T_zD \neq \xi_z$ for all $z\in \partial D$.
If such a disk does not exist then the contact structure is called tight.

By abuse of terminology, we also
call the pair $(M, \lambda)$ a contact manifold. We are interested in the dynamics of the
Reeb vector field $X_\lambda$ of $\lambda$ uniquely characterized by the equations
\[\lambda(X_\lambda) = 1\quad \text{and}\quad \imath_{X_\lambda} d \lambda = 0.\]
A periodic Reeb orbit will be denoted by $P = (x, T )$, where $x: \mathbb{R} \rightarrow M$ solves the
differential equation $\dot{x}= X_\lambda\circ x$ and $T >0$ is a period. It is said to be simply covered
if $T$ is the minimal period, namely, if $k \in \mathbb{N}$ is such that $T/k$ is a period of $x$, then
$k = 1$.We denote by $\mathcal {P}(\lambda)$ the set of equivalence classes of periodic orbits
of $\lambda$ with the identification
\[P = (x, T) \sim Q = (y, T^\prime) \Leftrightarrow T = T^\prime\quad \text{and}\quad x(\mathbb{R}) = y(\mathbb{R}).\]
We say that $P = (x, T)$ is contractible if the loop
\[\begin{array}{cccc}
   x_T : &\mathbb{R}/ \mathbb{Z} &\rightarrow& M \\
   & t  &\mapsto& x(Tt)
  \end{array}\]
is contractible on $M$. If the first Chern class $c_1(\xi)$ vanishes on $\pi_2(M)$, then every
contractible periodic orbit $P\in\mathcal {P}(\lambda)$ has a well-defined Conley-Zehnder index
$\mu_{CZ}(P)\in \mathbb{Z}$. As in \cite{HWZ98}, a contact form $\lambda$ on a smooth
closed 3-manifold $M$ is called {\it dynamically convex} if $c_1(ker\lambda)$ vanishes on $\pi_2(M)$
and every contractible periodic orbit $P \in \mathcal {P}(\lambda)$ satisfies $\mu_{CZ}(P)\geq 3$,
and by \cite{HWZ99}, if $\lambda$ is a dynamically convex
contact form on a closed 3-manifold $M$, then $\pi_2(M)$ vanishes and the contact structure $ker \lambda$ is tight.

A knot $K \hookrightarrow M$ is called $k$-unknotted, for some $k\in \mathbb{N}$, if there exists an
immersion $u : \mathbb{D} \rightarrow M$, such that $u_{\mathbb{D}\setminus \partial \mathbb{D}}$ is an embedding and
$u_{\partial \mathbb{D}} : \partial \mathbb{D}\rightarrow K$ is a $k$-covering map. The map $u$ is called a $k$-disk for $K$. If $K$ is
oriented then we say that $u$ induces the same orientation as $K$ if $u_{\partial \mathbb{D}}$ preserves orientation, where
$\partial \mathbb{D}$ has the counter-clockwise orientation. If the $k$-unknotted $K$ is transverse to $\xi$,
then $K$ is oriented by $\lambda$ and there exists a well-defined rational self-linking number
$sl(K, u) \in\mathbb{Q}$, computed with respect to a $k$-disk $u$ for $K$. If the first Chern
class $c_1(\xi)$ vanishes on $\pi_2(M)$, then $sl(K) = sl(K, u)$ does not depend on the choice
of $u$.

\begin{defn}
Let $\lambda$ be a defining contact form for a closed contact 3-manifold
$(M, \xi)$. Let $K \hookrightarrow M$ be a $k$-unknotted closed Reeb orbit of $\lambda$. A rational disk-like
global surface of section bounded by $K$ is a $k$-disk $u : \mathbb{D} \rightarrow M$ for $K = u(\partial \mathbb{D})$ so that
$u(\mathbb{D}\setminus \partial \mathbb{D})$ is transverse to $X_\lambda$, and every Reeb trajectory in $M\setminus K$ hits $u({\mathbb{D}\setminus \partial \mathbb{D}})$ infinitely many times in the past and in the future. In particular, the Reeb
flow of $\lambda$ is encoded in the corresponding first return map
$\psi : u(\mathbb{D}\setminus \partial \mathbb{D}) \rightarrow u(\mathbb{D}\setminus \partial \mathbb{D})$.
\end{defn}

Note that in \cite{Kim}, we also call this $k$-disk $u$ in the above definition a {\it k-rational disk-like global surface of section.}

A smooth involution $\varrho$ defined on a contact manifold $(M, \lambda)$ is said to be {\it anti-contact} if $\varrho^*\lambda = -\lambda$.
In this case, the triple $(M, \lambda, \varrho)$ is called a {\it real contact
manifold.} In this case, we call a global surface of section $\Sigma$ {\it invariant} if
$\varrho(\Sigma) = \Sigma$.

Let $(x_1, x_2, y_1, y_2)$ be coordinates in $\mathbb{R}^4$, equip $\mathbb{R}^4$ with the standard symplectic form
\[\omega_0=\sum_{i=1}^2dy_i\wedge dx_i.\]
The Liouville form
\[\lambda_0 =\frac{1}{2}\sum_{i=1}^2(y_idx_i-x_idy_i),\]
is a primitive of $\omega_0$ and restricted to a contact form on the 3-sphere
\[S^3 = \{|x_1|^2 + |x_2|^2+ |y_1|^2 + |y_2|^2= 1\}.\]
The contact structure $\xi_0= \mathrm{ker} \lambda_0$ is called the
standard tight contact structure on $S^3$. It is well known that it is the unique tight contact structure on $S^3$ up
to diffeomorphisms. We call the pair $(S^3, \xi_0)$ the {\it tight three-sphere.} A contact
form defining $\xi_0$ is called a {\it tight contact form}. Let $\lambda = f\lambda_0$ be a tight contact
form. Then a smooth involution $\varrho$ is anti-contact if and only if $f \circ\varrho = f$ and
$\varrho^*\lambda_0 = -\lambda_0$. The triple $(S^3, \lambda,\varrho)$ is called a {\it real tight three-sphere} (with respect to
$\varrho$).

Given relatively prime integers $p\geq q \geq 1$, there exists a free
action of $\mathbb{Z}_p := \mathbb{Z}/{p\mathbb{Z}}$ on $(S^3, \xi_0)$ generated by the $p$-periodic contactomorphism $g_{p,q} : S^3 \rightarrow S^3$
\[g_{p,q}(z_1=x_1+iy_1, z_2=x_2+iy_2) = (e^{2\pi i/p}z_1, e^{2\pi iq/p}z_2),\]
via the identification $\mathbb{C}^2=\mathbb{R}^4$.
The orbit space $L(p, q) := S^3/{\mathbb{Z}_p}$ is called a {\it lens space}. The contact structure $\xi_0$ descends to a tight contact form
on $L(p, q)$, still denoted $\xi_0$. It is called the standard tight contact structure on
$L(p, q)$.
When we consider a real tight three-sphere $(S^3, \lambda,\varrho)$ endowed with a contactomorphism $g_{p,q}$ as above,
where $\varrho$ is an anti-contact involution of the form
$(z_1, z_2) \mapsto (e^{i\vartheta_1}\bar{z}_1, e^{i\vartheta_2}\bar{z}_2), \vartheta_1, \vartheta_2\in\mathbb{R}$.
Then we have
\begin{eqnarray}
g_{p,q}\circ\varrho\circ g_{p,q}=\varrho.\nonumber
\end{eqnarray}
For each $j = 0, 1, \ldots, p-1$, the smooth map $\varrho_j := g_{p,q}^j\circ\varrho$ is an anti-contact involution
with $\varrho_0 = \varrho$. A periodic orbit $P = (x, T )$ is said to be {\it $(\varrho, g_{p,q})$-symmetric} if $\varrho_j(x(\mathbb{R})) =
x(\mathbb{R})$ for all $j$. The anti-contact involution $\varrho$ descends
to an anti-contact involution $\bar{\varrho}$ on $(L(p, q), \lambda)$. We call the triple $(L(p, q), \lambda, \bar{\varrho})$ a
real universally tight lens space.

\begin{defn}
Let $\lambda$ be a contact form on $L(p, q)$ inducing the standard tight
contact structure $\xi_0$. A rational open book decomposition with disk-like pages
and binding orbit $K$ is a pair $(\pi,K)$ formed by a $p$-unknotted closed Reeb orbit
$K \hookrightarrow L(p, q)$ and a smooth fibration $\pi: M \setminus K \rightarrow S^1$ so that the closure of each
fiber $\pi^{-1}(t)$ is the image of a $k$-disk for $K$.
\end{defn}

The following is the main result of \cite{Sch20} about the existence of $p$-rational disk-like global surface of section on $(L(p, 1),\xi_0)$.
\begin{lemma} \cite[Theorem 1.5]{Sch20}
Let $\lambda$ be a dynamically convex contact form on $(L(p, 1), \xi_0)$. Then its Reeb flow admits a $p$-unknotted closed Reeb orbit $P$ which is the binding of a rational open book decomposition. Each page of the open book is a rational disk-like global surface of section. Moreover, the Conley-Zehnder index of the $p$-th iterate of $P$ is 3.
\end{lemma}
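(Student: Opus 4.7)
The plan is to lift the problem to the universal cover $S^3 \to L(p,1)$, run the Hofer--Wysocki--Zehnder pseudoholomorphic curve machinery equivariantly with respect to the deck group $\mathbb{Z}_p$, and then descend the resulting open book structure. Concretely, pull $\lambda$ back to a contact form $\tilde\lambda = \pi^*\lambda$ on $S^3$; then $\tilde\lambda$ is $\mathbb{Z}_p$-invariant, where the action is generated by the contactomorphism $g_{p,1}$. First I would verify the correct index hypothesis on $S^3$: since orbits contractible in $L(p,1)$ need not be contractible in $S^3$, dynamical convexity of $\lambda$ translates into the condition that every closed Reeb orbit of $\tilde\lambda$ whose projection to $L(p,1)$ is contractible has Conley--Zehnder index at least $3$. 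This ``$p$-dynamical convexity'' plays the role of the classical HWZ hypothesis in the equivariant setting.

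Next I would apply the machinery of \cite{HWZ98} to $\tilde\lambda$. Choosing a $\mathbb{Z}_p$-invariant $d\tilde\lambda$-compatible almost complex structure on the symplectization $\mathbb{R} \times S^3$, the compactness and transversality required to produce a finite-energy foliation can be arranged equivariantly. HWZ then yield an open book on $S^3$ whose pages are embedded disk-like global sections asymptotic to a single distinguished closed Reeb orbit $\tilde P$ with $\mu_{CZ}(\tilde P) = 3$. The crux is to show that in the invariant setting the binding $\tilde P$ is itself $\mathbb{Z}_p$-invariant as a set. I would establish this by a uniqueness argument: any $g_{p,1}$-translate of the HWZ foliation still satisfies its characterizing properties, so equivariance of the almost complex structure together with uniqueness of the foliation forces $g_{p,1}$ to preserve both the foliation and its binding.

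Granted invariance of $\tilde P$, the projection $P := \pi(\tilde P) \subset L(p,1)$ is a simply covered closed Reeb orbit of $\lambda$ whose $p$-th iterate $P^p$ lifts to $\tilde P$; hence $\mu_{CZ}(P^p) = \mu_{CZ}(\tilde P) = 3$. The HWZ disk $\tilde u : \mathbb{D} \to S^3$ bounding $\tilde P$ descends to $u = \pi \circ \tilde u : \mathbb{D} \to L(p,1)$, which is an embedding on $\mathbb{D} \setminus \partial\mathbb{D}$ and a $p$-fold cover on $\partial\mathbb{D}$, witnessing that $P$ is $p$-unknotted. The equivariant foliation of $S^3 \setminus \tilde P$ by disks descends to a smooth fibration $L(p,1) \setminus P \to S^1$ whose fiber closures are $p$-disks for $P$ transverse to $X_\lambda$; this is the desired rational open book decomposition with disk-like pages that are simultaneously rational disk-like global surfaces of section.

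The main obstacle is keeping the delicate HWZ compactness and transversality arguments intact under the equivariance constraint, and in particular ruling out the alternative in which the HWZ foliation on $S^3$ is merely permuted (rather than preserved) by $g_{p,1}$, so that $\tilde P$ splits into several $\mathbb{Z}_p$-translates instead of being invariant. The $p$-dynamical convexity assumption is precisely what excludes the low-index short orbits that could otherwise serve as competing bindings and obstruct this invariance, so it is exactly what makes the descent to $L(p,1)$ go through.
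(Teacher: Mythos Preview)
The paper does not prove this lemma at all: it is quoted verbatim as Theorem~1.5 of \cite{Sch20} and used as a black box. So there is no ``paper's own proof'' to compare against; the relevant comparison is with Schneider's argument.

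Your outline is a reasonable heuristic, but it diverges from Schneider's actual strategy and has a genuine gap. Schneider does \emph{not} lift to $S^3$ and run HWZ equivariantly; he works directly in the symplectization of $L(p,1)$ with pseudoholomorphic curves, following the template of Hryniewicz--Salom\~{a}o \cite{HS16} for $\mathbb{R}P^3$. The reason is exactly the difficulty you flag but do not resolve: the pullback $\tilde\lambda$ on $S^3$ is in general \emph{not} dynamically convex, since closed Reeb orbits on $S^3$ that project to non-contractible orbits in $L(p,1)$ may have Conley--Zehnder index below $3$. The HWZ compactness arguments in \cite{HWZ98} use the full $\mu_{CZ}\ge 3$ hypothesis to rule out bubbling onto low-index orbits, so you cannot simply invoke their theorem for $\tilde\lambda$ and then argue about symmetry afterwards. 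Your ``$p$-dynamical convexity'' is the right condition, but making the HWZ machinery go through under it is essentially the content of Schneider's paper, not a corollary of \cite{HWZ98}.

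A second gap is the uniqueness step. You write that ``uniqueness of the foliation forces $g_{p,1}$ to preserve both the foliation and its binding,'' but \cite{HWZ98} establishes existence, not uniqueness, of the open book; there can be many index-$3$ unknotted orbits bounding disk-like global sections. So even with a $\mathbb{Z}_p$-invariant almost complex structure, it is not automatic that the particular binding produced is $g_{p,1}$-invariant rather than permuted among $p$ translates. Schneider sidesteps this entirely by producing the binding and pages intrinsically on $L(p,1)$, where no equivariance or descent argument is needed.
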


As its corollary, we obtain the existence of $g_{p,1}$-symmetric periodic orbit on
tight three-sphere endowed with a contactomorphism $g_{p,1}$, which bounds a disk-like global surface of section.
\begin{corollary}\label{cor: existence of global suface of section}
Let $(S^3, \lambda, \xi_0)$ be a dynamically convex tight three-sphere satisfying
$g_{p,1}^*\lambda = \lambda$ for some $p \geq1$. Then there exists a $g_{p,1}$-symmetric periodic orbit on
$S^3$ which bounds a disk-like global surface of section $\mathfrak{D}$. It is the lift of a periodic orbit on $(L(p, 1),\xi_0)$
that bounds a $p$-rational disk-like
global surface of section.
\end{corollary}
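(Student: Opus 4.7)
The plan is to descend to the lens space $L(p,1) = S^3/\mathbb{Z}_p$, apply Schneider's theorem (Lemma 6.2) there, and then equivariantly lift the resulting binding orbit and a page back to $S^3$.

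Since $g_{p,1}^*\lambda = \lambda$ and $g_{p,1}$ generates a free $\mathbb{Z}_p$-action by contactomorphisms, $\lambda$ descends through the covering $q\colon S^3 \to L(p,1)$ to a contact form $\bar\lambda$ on $L(p,1)$ defining the standard tight contact structure $\xi_0$. I would first verify that $\bar\lambda$ is dynamically convex. The Chern-class condition $c_1(\xi_0)|_{\pi_2(L(p,1))} = 0$ holds automatically. For the index condition, any contractible closed Reeb orbit $\bar P$ of $\bar\lambda$ has $[\bar P] = 0$ in $\pi_1(L(p,1)) = \mathbb{Z}_p$, so $\bar P$ admits $p$ disjoint lifts to $S^3$, each a closed Reeb orbit of $\lambda$ with the same period as $\bar P$, and hence contractible in the simply connected $S^3$. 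Because $q$ is a local contactomorphism, the linearized Reeb flows along a lift $P$ and along $\bar P$ are conjugate, so $\mu_{CZ}(\bar P) = \mu_{CZ}(P) \geq 3$ follows from dynamical convexity of $\lambda$.

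Applying Lemma 6.2 to $(L(p,1), \bar\lambda)$ yields a $p$-unknotted closed Reeb orbit $\bar P$ binding a rational open book decomposition whose pages are $p$-rational disk-like global surfaces of section. The construction in \cite{Sch20}, generalizing \cite{HWZ98,HS16}, produces $\bar P$ whose class generates $\pi_1(L(p,1))$, so the preimage $P := q^{-1}(\bar P)$ is a single embedded circle that $p$-covers $\bar P$. Let $u\colon \mathbb{D} \to L(p,1)$ parametrize one page; since $\mathbb{D}$ is simply connected, $u$ lifts to a smooth map $\tilde u\colon \mathbb{D} \to S^3$ with $q\circ\tilde u = u$. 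Then $\tilde u|_{\partial\mathbb{D}}\colon\partial\mathbb{D}\to P$ has degree one, hence is a diffeomorphism, so $\tilde u$ realizes $P$ as an unknotted closed Reeb orbit bounded by the embedded disk $\mathfrak{D} := \tilde u(\mathbb{D})$, with interior $\tilde u(\mathbb{D}\setminus\partial\mathbb{D})$ embedded and transverse to $X_\lambda$ because $u(\mathbb{D}\setminus\partial\mathbb{D})$ is. The orbit $P$ is $g_{p,1}$-invariant, hence $g_{p,1}$-symmetric, because $q(g_{p,1}(P))=q(P)=\bar P$ and $P$ is the unique component of $q^{-1}(\bar P)$.

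To finish, I would verify that $\mathfrak{D}$ is a genuine disk-like global surface of section for the Reeb flow of $\lambda$. The preimage of the open page splits as the disjoint union $\bigsqcup_{j=0}^{p-1} D_j$ with $D_j := g_{p,1}^j(\tilde u(\mathbb{D}\setminus\partial\mathbb{D}))$, all sharing the boundary $P$. Let $\Psi$ be the Poincar\'e first-return map of the Reeb flow to this union; it is $\mathbb{Z}_p$-equivariant and covers the first-return map on the $L(p,1)$-page. Connectedness of $D_0$ and continuity of $\Psi$ force $\Psi(D_0) \subset D_k$ for a single index $k\in\mathbb{Z}/p$, and equivariance gives $\Psi(D_j) \subset D_{j+k}$, so $\Psi^{p/\gcd(k,p)}$ preserves $D_0$. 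Since every Reeb trajectory in $S^3\setminus P$ projects to one hitting $u(\mathbb{D}\setminus\partial\mathbb{D})$ infinitely often in past and future (as $u(\mathbb{D})$ is a global page on $L(p,1)$), it must therefore return to $\mathfrak{D}$ infinitely often as well. The main delicate point is precisely this equivariant lift argument showing that $\mathfrak{D}$ alone, rather than the full union $\bigsqcup_j D_j$, already serves as a global surface of section; the descent/lift of dynamical convexity is essentially formal once one records the index-preservation under local contactomorphisms.
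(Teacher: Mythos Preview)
The paper does not supply an explicit proof of this corollary; it is stated immediately after Schneider's theorem (Lemma~6.2) as a direct consequence, with the covering-space passage $(S^3,\lambda)\to (L(p,1),\bar\lambda)$ left implicit. Your approach---descend, apply Schneider, and lift the binding orbit together with a page---is exactly the argument the reader is expected to supply, and your verification that $\bar\lambda$ inherits dynamical convexity and that the lifted page is an embedded disk bounded by the connected preimage $P=q^{-1}(\bar P)$ is correct.

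There is, however, a genuine gap in your final step. From $\Psi(D_0)\subset D_k$ and equivariance you conclude only that $\Psi^{p/\gcd(k,p)}$ preserves $D_0$; this shows that trajectories \emph{starting in} $D_0$ return to $D_0$, but if $d:=\gcd(k,p)>1$ then a trajectory whose first hit on $\bigsqcup_j D_j$ lands in some $D_\ell$ with $\ell\notin\{0,k,2k,\dots\}$ would never meet $D_0$ at all. Your concluding sentence (``it must therefore return to $\mathfrak{D}$ infinitely often'') does not follow from what precedes it. What closes the gap is that Lemma~6.2 gives a full rational open book, not just a single page: the entire $S^1$-family of pages lifts to an honest open book on $S^3$ with binding $P$, the Reeb flow is transverse to every page, and hence the induced angle $\tilde\pi(\phi^t(x))\in S^1$ is strictly monotone. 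The disks $D_j=g_{p,1}^j(D_0)$ are fibres of $\tilde\pi$ sitting at $p$ equally spaced angles, permuted cyclically by the deck group; the flow therefore visits them in cyclic order, forcing $\gcd(k,p)=1$. With this observation your argument is complete.
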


Combining this corollary and Theorem \ref{thm: refinement of Franks}, we get one of our main results in this section:
\begin{theorem}\label{thm: application 1}
Let $(S^3, \lambda, \xi_0)$ be a dynamically convex tight three-sphere satisfying
$g_{p,1}^*\lambda = \lambda$ for some $p \geq1$. Then there exist two
or infinitely many $g_{p,1}$-symmetric periodic orbits on $S^3$.
\end{theorem}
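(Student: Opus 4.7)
The strategy is to invoke Corollary \ref{cor: existence of global suface of section} to descend to a return map on $L(p,1)$, identify $g_{p,1}$-symmetric Reeb orbits on $S^3$ with periodic orbits of this return map whose period is coprime to $p$, and apply Theorem \ref{thm: refinement of Franks} with $n_0 = p$. More explicitly, Corollary \ref{cor: existence of global suface of section} produces a $g_{p,1}$-symmetric Reeb orbit $P_1$ on $S^3$ bounding a disk-like global surface of section $\mathfrak{D}$, whose quotient $\bar{P}_1$ on $L(p,1)$ bounds a $p$-rational disk-like global surface of section $\bar{\mathfrak{D}}$. Let $\psi : \mathrm{int}(\bar{\mathfrak{D}}) \to \mathrm{int}(\bar{\mathfrak{D}})$ denote the first return map of the descended Reeb flow; it is an orientation- and area-preserving homeomorphism of the open disk isotopic to the identity. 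By the Hofer--Wysocki--Zehnder argument (extended to $L(p,1)$ in \cite{Sch20} using $\mu_{CZ}(P_1^p) = 3$), $\psi$ admits an interior fixed point $z_0$.

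Next I would establish the correspondence between periodic orbits of $\psi$ and $g_{p,1}$-symmetric Reeb orbits on $S^3$. Since $g_{p,1}$ commutes with the Reeb flow and cyclically permutes the $p$ lifts of $\bar{\mathfrak{D}}$ inside $S^3$, the first return map $\phi$ of the Reeb flow on a single lift $\mathfrak{D}$ satisfies $\phi = \psi^p$ under the natural identification $\pi|_{\mathfrak{D}} : \mathfrak{D} \to \bar{\mathfrak{D}}$. Hence a prime $\psi$-periodic orbit of period $n$ on $\bar{\mathfrak{D}}$ pulls back in $S^3$ to exactly $\gcd(n, p)$ distinct closed Reeb orbits, cyclically permuted by $g_{p,1}$, and this collection reduces to a single $g_{p,1}$-symmetric Reeb orbit on $S^3$ if and only if $\gcd(n, p) = 1$. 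In particular, distinct $\psi$-orbits of periods coprime to $p$ give distinct $g_{p,1}$-symmetric Reeb orbits on $S^3$.

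Applying this correspondence to $z_0$ (period $1$, coprime to $p$) yields a second $g_{p,1}$-symmetric Reeb orbit $P_2 \neq P_1$, accounting for the ``two'' part of the dichotomy. If $\psi$ admits some further periodic point of period coprime to $p$ in $\mathrm{int}(\bar{\mathfrak{D}}) \setminus \{z_0\}$, then $\psi$ restricted to this open annulus satisfies the hypotheses of Theorem \ref{thm: refinement of Franks}: an isotopy to the identity can be chosen fixing $z_0$ throughout, and the restriction of $d\lambda$ remains a finite area. Taking $n_0 = p$ then produces infinitely many prime-periodic orbits of periods coprime to $p$, and by the correspondence these give infinitely many distinct $g_{p,1}$-symmetric Reeb orbits on $S^3$. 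The main obstacle is the correspondence itself: verifying $\phi = \psi^p$ and the counting of lifts requires careful tracking of the cyclic $\mathbb{Z}_p$-action on the lifted open book. A secondary technical point is ensuring the isotopy-to-identity condition survives the removal of $z_0$ so that Theorem \ref{thm: refinement of Franks} is applicable on the resulting open annulus.
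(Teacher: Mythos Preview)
Your proposal is correct and follows the same overall strategy as the paper: obtain a disk-like global surface of section from Corollary~\ref{cor: existence of global suface of section}, reduce to an area-preserving disk map, extract a fixed point, pass to the punctured disk (an open annulus), and apply Theorem~\ref{thm: refinement of Franks} with $n_0=p$ via the correspondence between periodic points of period coprime to $p$ and $g_{p,1}$-symmetric Reeb orbits.

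The differences are in framing and in two sub-arguments. You descend to $L(p,1)$ and work with the ordinary first return map $\psi$ on $\bar{\mathfrak D}$, then lift back; the paper stays on $S^3$ and works with the ``$\tfrac1p$-return map'' $f=g_{p,1}^{-1}\circ\psi$ on $\mathfrak D$ itself. These two maps are conjugate via $\pi|_{\mathfrak D}$, so the dynamics are identical. For the fixed point, the paper simply invokes Brouwer's plane translation theorem for the area-preserving disk map $f$; your appeal to a Conley--Zehnder index argument is unnecessary (and not quite how \cite{Sch20} produces the second orbit either). For the key correspondence, the paper replaces your lift-counting argument by a one-line B\'ezout computation: if $f^k(x)=x$ with $(k,p)=1$, choose $kq+pl=1$, so $f^{1-pl}(x)=x$, and since $f=g_{p,1}^{-1}\circ\psi$ with $g_{p,1}\circ\psi=\psi\circ g_{p,1}$ and $g_{p,1}^p=\mathrm{id}$, this unwinds to $\psi^{1-pl}(x)=g_{p,1}(x)$, exhibiting the $g_{p,1}$-symmetry directly. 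This avoids the bookkeeping you flag as the ``main obstacle''.
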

\begin{proof}
Since $g_{p,1}^*\lambda = \lambda$, then the Reeb vector field $X_\lambda$
satisfies $g_{p,1}^*X_\lambda =X_\lambda$, from which we find that the
Reeb flow $\phi^t_{X_\lambda}$ satisfies $\phi^t_{X_\lambda}=g_{p,1}^{-1}\circ\phi^t_{X_\lambda}\circ g_{p,1}$, i.e., $g_{p,1}\circ\phi^t_{X_\lambda}=\phi^t_{X_\lambda}\circ g_{p,1}$.
We define a Poincar$\acute{e}$ $\frac{1}{p}$-return map
$\psi: \mathfrak{D}\setminus \partial\mathfrak{D}\rightarrow g_{p,1}(\mathfrak{D}\setminus \partial\mathfrak{D})$ by
$\psi(x)=\phi^{\tau(x)}_{X_\lambda}(x)$, where $\tau(x) := \min{\{t > 0 \mid\phi^t_{X_\lambda}(x)\in g_{p,1}(\mathfrak{D}\setminus \partial\mathfrak{D})\}}$.
Let $f=g_{p,1}^{-1}\circ\psi$. Then, $f$ preserves the area induced by $d\lambda|_{\mathfrak{D}\setminus \partial \mathfrak{D}}$, and the total area of $\mathfrak{D}\setminus \partial \mathfrak{D}$ is finite
\footnote{A page of the rational open book decomposition in \cite[Theorem 1.5]{Sch20} is in fact   the projection of a finite  energy holomorphic plane into the contact manifold (see \cite{Sch20} and \cite{HS16}).
  Similar to the second paragraph in page 265 of  \cite{HWZ98}, we have $\psi^* d \lambda |_{\mathfrak{D}\setminus \partial\mathfrak{D}}=d \lambda |_{g_{p,1}(\mathfrak{D}\setminus \partial\mathfrak{D})}$,  and
      $d \lambda |_{\mathfrak{D}\setminus \partial\mathfrak{D}}$ induces an area of $ \mathfrak{D}\setminus \partial\mathfrak{D}$ with finite total area.
}.
By Brouwer Plane Translation Theorem, $f$ has at least one interior fixed point $x_0$, then $\psi(x_0)=g_{p,1}(x_0)$ which corresponds to
a $g_{p,1}$-symmetric periodic orbit on $S^3$. Now, $f$ is also an area-preserving homeomorphism on
the open annuls $\mathbb{A}=\mathfrak{D}\setminus \{x_0, \partial\mathfrak{D}\}$, applying Theorem \ref{thm: refinement of Franks} by taking $n_0=p$,
we obtain that $$\sharp\left\{\bigcup_{(k,p)=1}\mathrm{Per}_{k}(f)\right\}=0~ \text{or}~ +\infty.$$
In the following, we prove that any $x\in\mathrm{Per}_{k}(f)$ corresponds to a $g_{p,1}$-symmetric periodic orbit on $S^3$,
where $(k,p)=1$. In fact, since $(k,p)=1$, then there exist some $l,q\in\mathbb{Z}$ such that $kq+pl=1$ which implies\
$f^{1-pl}(x)=f^{kq}(x)=x$. Note that $f=g_{p,1}^{-1}\circ\psi$, $g_{p,1}\circ\psi=\psi\circ g_{p,1}$
and $g_{p,1}^p=id$, then $\psi^{1-pl}=g_{p,1}(x)$ and $x$ corresponds to a $g_{p,1}$-symmetric periodic orbit on $S^3$,
the proof of the converse is the same. Hence there exist two
or infinitely many $g_{p,1}$-symmetric periodic orbits on $S^3$.
\end{proof}

For real tight three-sphere $(S^3, \lambda,\varrho)$ endowed with a contactomorphism $g_{p,1}$,
Kim obtains the existence of $(\varrho, g_{p,1})$-symmetric periodic orbit on
$S^3$ which bounds a $\varrho$-invariant disk-like global surface of section.
\begin{lemma} \cite[Theorem 1.12]{Kim}\label{lem: existence of symetric global suface of section}
Let $(S^3, \lambda,\varrho)$ be a dynamically convex real tight three-sphere such
that $\varrho$ is of the form $(z_1, z_2) \mapsto (e^{i\vartheta_1}\bar{z}_1, e^{i\vartheta_2}\bar{z}_2)$ with either
$\vartheta_1=0$ or $\vartheta_2=0$. Assume that $\lambda$ satisfies
$g_{p,1}^*\lambda = \lambda$ for some $p \geq1$. Then there exists a $(\varrho, g_{p,1})$-symmetric periodic orbit on
$S^3$ which bounds a $\varrho$-invariant disk-like global surface of section $\mathfrak{D}$. It is the lift of a
$\bar{\varrho}$-symmetric periodic orbit on $L(p, 1)$ that bounds a $\bar{\varrho}$-invariant $p$-rational disk-like
global surface of section.
\end{lemma}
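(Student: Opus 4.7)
The plan is to transfer the problem to the quotient lens space and then work equivariantly. Since $g_{p,1}^{*}\lambda=\lambda$, the contact form $\lambda$ descends to a dynamically convex contact form $\bar\lambda$ on $L(p,1)$ defining the standard tight $\xi_{0}$. The identity $g_{p,1}\circ\varrho\circ g_{p,1}=\varrho$, noted in the text, shows that $\varrho$ permutes $\mathbb{Z}_{p}$-orbits and so descends to an anti-contact involution $\bar\varrho$ of $(L(p,1),\bar\lambda)$. The hypothesis that $\vartheta_{1}=0$ or $\vartheta_{2}=0$ ensures that $\bar\varrho$ has a non-empty fixed-point set on $L(p,1)$ containing closed Reeb trajectories. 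It therefore suffices to produce, on $L(p,1)$, a $\bar\varrho$-symmetric $p$-unknotted Reeb orbit $\bar K$ bounding a $\bar\varrho$-invariant rational disk-like global surface of section; pulling the resulting open book back through the $p$-fold cover $S^{3}\to L(p,1)$ then yields the statement on $S^{3}$.

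The non-equivariant version of this is Schneider's theorem, whose pages arise as projections to $L(p,1)$ of finite-energy pseudoholomorphic disks in the symplectization $(\mathbb{R}\times L(p,1),d(e^{a}\bar\lambda))$ for a $d\bar\lambda$-compatible almost complex structure $J$ on $\xi_{0}$. The crucial step is to replace $J$ by a $\bar\varrho$-anti-invariant choice, meaning $T\bar\varrho\circ J=-J\circ T\bar\varrho$ on $\xi_{0}$, and to extend it to the symplectization so that the lifted involution $\widehat{\bar\varrho}(a,x):=(a,\bar\varrho(x))$ is anti-holomorphic. Pre-composing a finite-energy disk $u:\mathbb{D}\to\mathbb{R}\times L(p,1)$ with complex conjugation on $\mathbb{D}$ then defines a smooth anti-holomorphic involution on the moduli space whose fixed locus consists precisely of the disks with $\bar\varrho$-invariant image.

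To populate this fixed locus I would seed a $\bar\varrho$-equivariant Bishop-type family at a $\bar\varrho$-symmetric simple Reeb orbit in $\mathrm{Fix}(\bar\varrho)$ and propagate it by the continuation argument of \cite{HWZ98}. Dynamical convexity, which is preserved by $\bar\varrho$, rules out the bad bubbling configurations exactly as in the non-equivariant case, and the SFT-type compactification of the fixed-locus family produces a foliation of $L(p,1)\setminus\bar K$ by disk-like pages with $\mu_{CZ}(\bar K^{p})=3$. Positivity of intersection forces any two $\widehat{\bar\varrho}$-related disks to coincide, so the whole foliation is $\bar\varrho$-invariant page by page and the binding $\bar K$ is $\bar\varrho$-symmetric. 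Lifting everything through $S^{3}\to L(p,1)$ produces the desired $(\varrho,g_{p,1})$-symmetric Reeb orbit and $\varrho$-invariant disk-like global surface of section on $S^{3}$.

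The main obstacle is equivariant transversality. A generic $\bar\varrho$-equivariant $J$ does not automatically make the fixed locus of the involution on the moduli space transversely cut out; one must either invoke automatic transversality for low-index somewhere-injective $J$-disks in dimension four (available here thanks to dynamical convexity) or implement an abstract $\bar\varrho$-equivariant perturbation scheme in the spirit of Kim. A secondary technical point is to ensure that no $\bar\varrho$-asymmetric broken configurations appear as boundary strata of the equivariant compactification inside the fixed locus, which is again controlled by the intersection-theoretic rigidity of finite-energy foliations.
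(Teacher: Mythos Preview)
The paper does not contain a proof of this statement at all: it is quoted as a black-box input, attributed to \cite[Theorem~1.12]{Kim}, and immediately used to prove Theorem~\ref{thm: application 2}. There is therefore no ``paper's own proof'' against which to compare your proposal.

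That said, your sketch is broadly in line with the actual approach of \cite{Kim} (and of the earlier work \cite{FK} on which it builds): one chooses an almost complex structure $J$ on $\xi_{0}$ that is anti-invariant under the involution, so that the lifted involution on the symplectization is anti-holomorphic, and then runs the finite-energy foliation machinery of \cite{HWZ98,HS16,Sch20} in a $\bar\varrho$-equivariant fashion. You have correctly identified the two genuine technical issues---equivariant transversality (handled in dimension four by automatic transversality for embedded fast planes with the given Conley--Zehnder index) and the exclusion of asymmetric broken configurations from the boundary of the equivariant moduli space. One point where your outline is somewhat loose: the Bishop-family seeding is not quite how the argument goes in the lens-space setting; rather, one works with the compactified moduli of finite-energy planes asymptotic to the binding and uses the involution on that moduli space directly, together with intersection-theoretic rigidity, to force invariance of the whole open book. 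But none of this is in the present paper, which simply invokes the result.
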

Similar to Theorem \ref{thm: application 1}, we can get the following result:
\begin{theorem}\label{thm: application 2}
Let $(S^3, \lambda,\varrho)$ be a dynamically convex real tight three-sphere such
that $\varrho$ is of the form $(z_1, z_2) \mapsto (e^{i\vartheta_1}\bar{z}_1, e^{i\vartheta_2}\bar{z}_2)$ with either
$\vartheta_1=0$ or $\vartheta_2=0$. Assume that $\lambda$ satisfies
$g_{p,1}^*\lambda = \lambda$ for some $p \geq1$. Then, there exist two
or infinitely many $(\varrho, g_{p,1})$-symmetric periodic orbits on $S^3$.
\end{theorem}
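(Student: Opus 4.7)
The plan is to imitate the proof of Theorem \ref{thm: application 1}, replacing Corollary \ref{cor: existence of global suface of section} with Lemma \ref{lem: existence of symetric global suface of section} and Theorem \ref{thm: refinement of Franks} with Theorem \ref{thm: Franks thm for reversible map}. First, I apply Lemma \ref{lem: existence of symetric global suface of section} to obtain a $(\varrho,g_{p,1})$-symmetric binding Reeb orbit bounding a $\varrho$-invariant disk-like global surface of section $\mathfrak{D}$. Define $\psi:\mathfrak{D}\setminus\partial\mathfrak{D}\to g_{p,1}(\mathfrak{D}\setminus\partial\mathfrak{D})$ as the Poincar\'e $\frac{1}{p}$-return map and set $f:=g_{p,1}^{-1}\circ\psi$; exactly as in the proof of Theorem \ref{thm: application 1}, $f$ is an area-preserving homeomorphism of $\mathfrak{D}\setminus\partial\mathfrak{D}$ with finite total area. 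The crucial new ingredient is to verify that $f$ is reversible with respect to $R:=\varrho|_{\mathfrak{D}\setminus\partial\mathfrak{D}}$, i.e., $f\circ R=R\circ f^{-1}$. Since $\varrho^*\lambda=-\lambda$, the Reeb flow satisfies $\varrho\circ\phi^t=\phi^{-t}\circ\varrho$; combining this with $\varrho\circ g_{p,1}=g_{p,1}^{-1}\circ\varrho$ (which follows from $g_{p,1}\circ\varrho\circ g_{p,1}=\varrho$) and tracking the reversed flow through the cyclically ordered pages $\{g_{p,1}^j(\mathfrak{D})\}_{j=0}^{p-1}$, one computes $\psi\circ\varrho=g_{p,1}\circ\varrho\circ\psi^{-1}\circ g_{p,1}$ on $\mathfrak{D}\setminus\partial\mathfrak{D}$, which rearranges to the desired reversibility identity.

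The second step is to place $f$ on an $R$-invariant open annulus so that Theorem \ref{thm: Franks thm for reversible map} can be applied. The reflection $R$ has fixed-point set a proper arc $Y$ (the interior of a diameter of $\mathfrak{D}$) separating $\mathfrak{D}\setminus\partial\mathfrak{D}$ into two halves $\mathfrak{D}_+$ and $\mathfrak{D}_-$. Since $\varrho^*(d\lambda)=-d\lambda$, $R$ preserves the unsigned area induced by $d\lambda$, so $\mathfrak{D}_+$ and $\mathfrak{D}_-$ have equal area $A/2$. If $f(Y)\cap Y\cap\mathrm{Int}(\mathfrak{D})=\emptyset$, then $f(Y)$ would lie entirely on one side of $Y$, cutting $\mathfrak{D}$ into two regions of areas $A/2+c$ and $A/2-c$ with $c>0$; since $f$ is area-preserving and $f(\mathfrak{D}_+)$ must equal one of these two regions, this gives a contradiction. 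Hence $f(Y)\cap Y\cap\mathrm{Int}(\mathfrak{D})\ne\emptyset$, and by Kang's Theorem (Theorem \ref{thm:Kang}) there exists a symmetric fixed point $x_0\in Y$. Removing $x_0$ and $\partial\mathfrak{D}$ yields the $R$-invariant open annulus $\mathbb{A}:=\mathfrak{D}\setminus(\{x_0\}\cup\partial\mathfrak{D})$, on which $f$ is an area-preserving, reversible homeomorphism isotopic to the identity.

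Finally, I apply Theorem \ref{thm: Franks thm for reversible map} to $f$ on $\mathbb{A}$ with $k=1$ and $n_0=p$; since $x_0$ is a fixed point the hypothesis holds, yielding either finitely or infinitely many symmetric $k'$-prime-periodic points of $f$ with $(k',p)=1$. Each such point $x$ lifts to a $(\varrho,g_{p,1})$-symmetric periodic Reeb orbit on $S^3$: the $g_{p,1}$-symmetry is obtained exactly as in Theorem \ref{thm: application 1} by choosing integers $q,l$ with $k'q+pl=1$ so that $\psi^{k'q}(x)=g_{p,1}(x)$, while the symmetry relation $R(x)=f^m(x)$ combined with $f^m=g_{p,1}^{-m}\circ\psi^m$ gives $\phi^{T_m}(x)=g_{p,1}^m\varrho(x)=\varrho_m(x)$ for the relevant return time $T_m$, so $\varrho_m(x)$ lies on the Reeb orbit through $x$; together with the $g_{p,1}$-invariance of the orbit this forces every $\varrho_j$ to preserve it. I expect the main obstacles to be (i) the reversibility verification in the first paragraph, which requires carefully tracking how the reversed Reeb flow interacts with the cyclic page structure, and (ii) the area-topology argument in the second paragraph forcing $f(Y)\cap Y\ne\emptyset$ in the interior of $\mathfrak{D}$.
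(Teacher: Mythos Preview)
Your overall strategy matches the paper's proof closely: obtain a $\varrho$-invariant disk $\mathfrak{D}$ from Lemma~\ref{lem: existence of symetric global suface of section}, verify that $f=g_{p,1}^{-1}\circ\psi$ is reversible with respect to $R=\varrho|_{\mathfrak{D}}$, find a symmetric fixed point $x_0$, pass to the annulus $\mathbb{A}=\mathfrak{D}\setminus(\{x_0\}\cup\partial\mathfrak{D})$, and invoke Theorem~\ref{thm: Franks thm for reversible map}. Your reversibility computation is correct (the paper writes the same identity more compactly as $\psi\circ\varrho=\varrho\circ\psi^{-1}$, interpreting $\psi$ as the page-to-page map on the whole open book). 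Your explicit area argument showing $f(Y)\cap Y\cap\mathrm{Int}(\mathfrak{D})\ne\emptyset$ is a valid substitute for the paper's direct citation of \cite[Corollary~1.2]{Kang}.

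There is, however, a genuine slip in your final paragraph. You write ``since $x_0$ is a fixed point the hypothesis holds'' when applying Theorem~\ref{thm: Franks thm for reversible map} to $f$ on $\mathbb{A}$, but $x_0$ has been \emph{removed} from $\mathbb{A}$, so it cannot serve as the required $k$-periodic point. The paper handles this correctly by stating only a conditional: \emph{if} $f|_{\mathbb{A}}$ has some $k$-periodic point with $(k,p)=1$, then Theorem~\ref{thm: Franks thm for reversible map} yields infinitely many symmetric $k'$-periodic points with $(k',p)=1$, each giving a $(\varrho,g_{p,1})$-symmetric Reeb orbit. In the complementary case there may be no such periodic points on $\mathbb{A}$ at all, and one is left with exactly the two $(\varrho,g_{p,1})$-symmetric orbits already in hand (the binding and the orbit through $x_0$). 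This is precisely the ``two or infinitely many'' dichotomy. Your phrase ``yielding either finitely or infinitely many'' suggests you sensed the dichotomy, but you must not claim the hypothesis of Theorem~\ref{thm: Franks thm for reversible map} is automatically satisfied on $\mathbb{A}$.
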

\begin{proof}
We give the proof based on the proof of Theorem \ref{thm: application 1}. Since $\varrho^*\lambda =- \lambda$, then the Reeb vector field ${X_\lambda}$
satisfies $\varrho^*{X_\lambda} =-{X_\lambda}$, from which we find $\phi^t_{X_\lambda}=\varrho\circ\phi^{-t}_{X_\lambda}\circ \varrho$.
Note that $g_{p,1}\circ\varrho\circ g_{p,1}=\varrho$ and  $g_{p,1}\circ\phi^t_{X_\lambda}=\phi^t_{X_\lambda}\circ g_{p,1}$, we obtain
\begin{align*}
f\circ\varrho&=g_{p,1}^{-1}\circ\psi\circ\varrho=g_{p,1}^{-1}\circ\varrho\circ\psi^{-1}\nonumber\\
&=\varrho\circ g_{p,1}\circ\psi^{-1}=\varrho\circ\psi^{-1}\circ g_{p,1}\nonumber\\
&=\varrho\circ f^{-1},\nonumber
\end{align*}
hence the area-preserving homeomorphism $f$ on  $\mathfrak{D}\setminus \partial\mathfrak{D}$ is reversible with respect to the involution
$\varrho|_{\mathfrak{D}}$. It follows from Corollary 1.2 of \cite{Kang} that $f$ has at least one interior symmetric fixed point $x_0$, i.e.,
$f(x_0)=x_0=\varrho(x_0)$, then $\psi(x_0)=g_{p,1}(x_0)$ which corresponds to
a $(\varrho, g_{p,1})$-symmetric periodic orbit on $S^3$. Now, $f$ is also an area-preserving reversible homeomorphism on
the open annuls $\mathbb{A}=\mathfrak{D}\setminus \{x_0, \partial\mathfrak{D}\}$, applying Theorem \ref{thm: Franks thm for reversible map},
we obtain that if $f$ has a $k$-periodic point satisfying $(k,p)=1$, then $f$ has infinitely many symmetric periodic points
with period $k^\prime$ satisfying $(k^\prime,p)=1$. Similar to the proof in Theorem \ref{thm: application 1}, any symmetric $k^\prime$-periodic point $x$ corresponds to
a $g_{p,1}$-symmetric periodic orbit on $S^3$, and $f^l(x)=\varrho(x)$ for some $l\in\mathbb{N}$ which yields
$\psi^l(x)=g_{p,1}^l\circ \varrho(x)$. Then, $x$ corresponds to
a $(\varrho, g_{p,1})$-symmetric periodic orbit. Hence there exist two
or infinitely many $(\varrho, g_{p,1})$-symmetric periodic orbits on $S^3$.
\end{proof}

\begin{remark}
For the case $p=2$ of Theorem \ref{thm: application 2}, Kim has proved it  using   \cite[Theorem 1.3]{Kang} (see \cite[Theorem 1.12]{Kim}).
For dynamically convex real tight three-sphere $(S^3, \lambda,\varrho)$, Frauenfelder and Kang proved that there exist two or infinitely many $\varrho$-symmetric periodic orbits by \cite[Theorem 1.1]{Kang} (see\cite{FK}).
\end{remark}

\begin{remark}\label{remark: dynamically convex condition}
Hofer, Wysocki and Zehnder's dichotomy
theorem of \cite{HWZ98} is expected to hold for any contact form on contact three-manifold.
It is conjectured that in fact, every contact form on a closed connected three-manifold has either two or infinitely many periodic orbits.  This was recently proved  in~\cite{CDR20} for contact forms that are nondegenerate, extending a result of
\cite{CGHP} for non-degenerate torsion contact forms. Based on \cite{CGHP} and \cite{CDR20}, it is possible to remove the dynamically convexity condition in our Theorem \ref{thm: application 1} and Theorem \ref{thm: application 2}. Note also that Hryniewicz, Momin and Salom$\tilde{a}$o \cite{HMS15}
proved a remarkable Poincar\'e-Birkhoff type theorem for tight Reeb flows on $S^3$ where a global surface of section might not be available. Umberto Hryniewicz
has suggested to the authors in private correspondence that perhaps for our Theorem 1 there holds a general analogous result about Reeb flows on the tight $S^3$ possessing a Hopf link of periodic orbits, as in their Poincar\'e-Birkhoff theorem for Reeb flows of \cite{HMS15}.
\end{remark}

Our Theorem \ref{thm: application 1} and Theorem \ref{thm: application 2} can be applied to problems in celestial mechanics, for example,
the H$\acute{e}$non-Heiles system \cite{HH64} which describes chaotic motions
of a star in a galaxy with an axis of symmetry. The Hamiltonian is given by
\[H(q_1, q_2, p_1, p_2) =\frac{1}{2}(p_1^2 +p_2^2+q_1^2 +q_2^2)+ q_1^2q_2-\frac{1}{3}q^3_2, ~~~(q_1, q_2, p_1, p_2) \in\mathbb{R}^4.\]
It is invariant under the exact anti-symplectic involution
\[\varrho(q_1, q_2, p_1, p_2) = (-q_1, q_2, p_1,-p_2)\]
and the 3-periodic exact symplectic involution
\[\sigma(q, p) := (e^{2\pi i/3}q, e^{2\pi i/3}p),~~~ (q, p) = (q_1 + iq_2, p_1 + ip_2)\]
satisfying $\sigma\circ\varrho\circ\sigma=\varrho$.
There exist exactly two critical levels 0 and $\frac{1}{6}$ of $H$, and for every $c \in (0, \frac{1}{6})$,
the energy level set $H^{-1}(c)$ contains a unique bounded component $\Sigma_c$ that is diffeomorphic
to $S^3$. It is proved by Schneider that $\Sigma_c$ bounds a strictly convex domain
in $\mathbb{R}^4$ containing the origin \cite[Theorem 2.2]{Sch20}. Due to Theorem 3.7 of \cite{HWZ98}, it shows that the contact form $\lambda = \lambda_0|_{\Sigma_c}$ is dynamically
convex, where $\lambda_0$ is the Liouville 1-form on $\mathbb{R}^4$.

Note that $\sigma$ differs from $g_{3,1}$. Therefore, the induced contact structure $\widehat{\xi}$ on $L(3,1)\equiv S^3/ \mathbb{Z}^3$ is different from
$\xi_0$. Indeed, $(S^3/ \mathbb{Z}^3, \widehat{\xi})$ is contactomorphic to $(L(3,2), \xi_0)$ (cf., Section 2 of \cite{Sch20}). Thus, Theorems \ref{thm: application 1} and \ref{thm: application 2} can not apply directly to the H$\acute{e}$non-Heiles system, but the (invariant) 3-rational
disk-like global surface of section on $(S^3/ \mathbb{Z}^3, \widehat{\xi})$ still exists as we shall explain in Remark 7, which together with the proofs of
Theorems \ref{thm: application 1} and \ref{thm: application 2} still yield the applications on the H$\acute{e}$non-Heiles system:

\begin{theorem}
For every $c \in (0, \frac{1}{6})$, there exist two
or infinitely many $\sigma$-symmetric periodic orbits on $\Sigma_c$. Furthermore, there exist two
or infinitely many $(\varrho, \sigma)$-symmetric periodic orbits on $\Sigma_c$.
\end{theorem}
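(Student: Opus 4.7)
The plan is to adapt the arguments of Theorems~\ref{thm: application 1} and \ref{thm: application 2} to the H\'enon--Heiles setting, with the role of the contactomorphism $g_{p,1}$ played by the $3$-periodic exact symplectic involution $\sigma$. The starting ingredient, supplied by Remark~\ref{remark: dynamically convex condition} together with the results of \cite{Sch20} and \cite{Kim} suitably translated from $L(p,1)$ to $L(3,2)\cong S^3/\mathbb{Z}_3$, is a closed Reeb orbit $K$ on $\Sigma_c$ bounding a disk-like global surface of section $\mathfrak{D}$ with $K$ being $\sigma$-symmetric; in the $\varrho$-equivariant case $K$ is additionally $\varrho$-symmetric and $\mathfrak{D}$ can be chosen $\varrho$-invariant. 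I take these existence statements as given.

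Once $\mathfrak{D}$ is in hand, I follow the proof of Theorem~\ref{thm: application 1} nearly verbatim. I define the Poincar\'e $\frac{1}{3}$-return map $\psi : \mathfrak{D}\setminus\partial\mathfrak{D}\to \sigma(\mathfrak{D}\setminus\partial\mathfrak{D})$ by $\psi(x)=\phi_{X_\lambda}^{\tau(x)}(x)$ with $\tau(x)$ the first positive time that $\phi^t_{X_\lambda}(x)$ lands in $\sigma(\mathfrak{D}\setminus\partial\mathfrak{D})$, and set $f=\sigma^{-1}\circ\psi$. Since $\sigma^*\lambda=\lambda$, the computation in Theorem~\ref{thm: application 1} shows that $f$ is an area-preserving homeomorphism of $\mathfrak{D}\setminus\partial\mathfrak{D}$ with finite total area. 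Brouwer's plane translation theorem produces an interior fixed point $x_0$, giving a second $\sigma$-symmetric periodic Reeb orbit on $\Sigma_c$. Applying Theorem~\ref{thm: refinement of Franks} with $n_0=3$ on the open annulus $\mathfrak{D}\setminus\{x_0,\partial\mathfrak{D}\}$ then yields either zero or infinitely many $k$-prime-periodic points of $f$ with $(k,3)=1$, and the B\'ezout identity argument (if $kq+3l=1$ then $\psi^{1-3l}(x)=\sigma(x)$) from Theorem~\ref{thm: application 1} shows each such point corresponds to a $\sigma$-symmetric Reeb orbit on $\Sigma_c$. This proves the first claim.

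For the second claim I observe that the identity $\sigma\circ\varrho\circ\sigma=\varrho$ has exactly the same formal shape as $g_{p,1}\circ\varrho\circ g_{p,1}=\varrho$. Combined with $\varrho^*X_\lambda=-X_\lambda$ (hence $\phi_{X_\lambda}^t\circ\varrho=\varrho\circ\phi_{X_\lambda}^{-t}$), the commutation $\sigma\circ\psi=\psi\circ\sigma$, and the relation $\sigma^{-1}\circ\varrho=\varrho\circ\sigma$, the same short chain of equalities as in the proof of Theorem~\ref{thm: application 2} yields $f\circ\varrho=\varrho\circ f^{-1}$ on $\mathfrak{D}\setminus\partial\mathfrak{D}$, so $f$ is reversible with respect to $\varrho|_{\mathfrak{D}}$ in the sense of Section~2. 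Kang's symmetric fixed point criterion (Corollary~1.2 of \cite{Kang}) supplies an interior symmetric fixed point $x_0$ of $f$, corresponding to a $(\varrho,\sigma)$-symmetric Reeb orbit, and Theorem~\ref{thm: Franks thm for reversible map} applied to $\mathfrak{D}\setminus\{x_0,\partial\mathfrak{D}\}$ with $n_0=3$, together with the same B\'ezout argument, then delivers either two or infinitely many $(\varrho,\sigma)$-symmetric orbits.

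The main obstacle I foresee lies not on the annulus side, which is a transcription of Theorems~\ref{thm: application 1} and \ref{thm: application 2}, but on the contact-topological side: one must justify that the (invariant) $3$-rational disk-like global surface of section exists on $(S^3/\mathbb{Z}_3,\widehat{\xi})$, given that the directly cited results of \cite{Sch20,Kim} are stated for $L(p,1)$ while $\widehat{\xi}$ corresponds to the tight structure on $L(3,2)$. I expect this to be addressed in the discussion accompanying Remark~\ref{remark: dynamically convex condition}; with that existence in hand, the rest of the argument is mechanical.
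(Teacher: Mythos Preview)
Your plan is correct and coincides with the paper's approach: once a $(\varrho$-invariant$)$ disk-like global surface of section bounded by a $\sigma$-symmetric orbit is available, the proofs of Theorems~\ref{thm: application 1} and~\ref{thm: application 2} transcribe verbatim with $\sigma$ in place of $g_{p,1}$. The only correction is your pointer to Remark~\ref{remark: dynamically convex condition}: the existence of the needed surface on $\Sigma_c/\mathbb{Z}_3\cong L(3,2)$ is instead supplied in the remark following the theorem, where the paper uses the explicit orbits $\Pi_7,\Pi_8$ of Churchill--Pecelli--Rod~\cite{CPR79}, which descend to $3$-unknotted $\varrho$-invariant orbits on $L(3,2)$ with rational self-linking number $-1/3$, and then invokes \cite[Corollary~1.8]{HS16}, \cite[Theorem~2.3]{Sch20}, and \cite[Theorem~1.10]{Kim} to obtain the $(\varrho$-invariant$)$ rational open book.
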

\begin{remark}
In \cite{CPR79}, Churchill, Pecelli, and
Rod find eight periodic orbits on $\Sigma_c$, labeled by $\Pi_1,\cdots ,\Pi_8$. They are related by
$\sigma^2(\Pi_j ) = \sigma(\Pi_{j+1}) = \Pi_{j+2}, j = 1, 4$, and $\Pi_7$ and $\Pi_8$ are $(\varrho, \sigma)$-symmetric periodic orbits.
In fact, by Section 2 of \cite{Sch20}, $\Pi_7$ and $\Pi_8$ descend to
3-unknotted closed Reeb orbits $\widehat{\Pi}_7$ and $\widehat{\Pi}_8$ on $(L(3, 2),\xi_0)$, which are invariant under $\varrho$. Moreover, their rational
self-linking numbers are $-1/3$. This together with a theorem of  Hryniewicz and Salom$\tilde{a}$o guarantees that each
$\widehat{\Pi}_i$, $i = 7, 8$ is the binding of a rational open book decomposition whose pages
are rational disk-like global surfaces of section for the Reeb flow on $\Sigma_c/ \mathbb{Z}_3\equiv
(L(3, 2), \xi_0)$ (cf. \cite[Corollary 1.8]{HS16}, \cite[Theorem 2.3]{Sch20}, and  \cite[Theorem 1.10]{Kim}), these rational disk-like global surfaces of section are invariant under $\varrho$. Then a similar argument of Corollary \ref{cor: existence of global suface of section} and Theorem \ref{thm: application 1}
shows that there exist two or infinitely many $\sigma$-symmetric periodic orbits on $\Sigma_c$, and a similar argument of Lemma \ref{lem: existence of symetric global suface of section} and Theorem \ref{thm: application 2} yields that there exist two or infinitely many $(\varrho, \sigma)$-symmetric periodic orbits on $\Sigma_c$.
\end{remark}

\end{document}